\documentclass[twoside, 12pt, reqno]{amsart}

\usepackage[left=2.5cm, right=2.5cm, top=3cm, bottom=2.5cm]{geometry}

\usepackage[colorlinks=true, pdfstartview=FitV, linkcolor=blue,citecolor=blue, urlcolor=blue]{hyperref}

\usepackage[usenames]{xcolor}
\definecolor{labelkey}{rgb}{0,0,1}
\definecolor{Red}{rgb}{0.7,0,0.1}
\definecolor{Green}{rgb}{0,0.7,0}

\usepackage{amsfonts, amssymb, amsmath, amsthm, mathrsfs, bbm, cjhebrew, gensymb, textcomp, mathtools, dsfont,tikz}

\usepackage[normalem]{ulem}

\usepackage{commath, setspace, subcaption, parcolumns, multirow, multicol, accents, comment, marginnote, verbatim, empheq, enumerate, stackrel, enumitem}

\usepackage{cleveref}
\usepackage{graphicx}
\usepackage{epsfig}
\usepackage{psfrag}
\usepackage{float}
\usepackage[active]{srcltx}
\usepackage[pagewise, mathlines]{lineno}

\newtheorem{remark}{Remark}
\newtheorem{lemma}{Lemma}
\numberwithin{lemma}{section}

\newtheorem{theorem}{Theorem}
\numberwithin{theorem}{section}

\numberwithin{proposition}{section}

\numberwithin{equation}{section}

\newcommand{\al}{\alpha}
\newcommand{\be}{\beta}

\newcommand{\eps}{\epsilon}
\newcommand{\veps}{\varepsilon}

\newcommand{\kap}{\kappa}

\newcommand{\Lam}{\Lambda}
\newcommand{\si}{\sigma}

\newcommand{\tht}{\theta}

\newcommand{\RR}{\mathbb{R}}

\newcommand{\lb}{\big\langle}
\newcommand{\rb}{\big\rangle}

\newcommand{\Sob}[2]{\lVert#1\rVert_{#2}}

\newcommand{\bdy}{\partial}

\newcommand{\til}[1]{\widetilde{#1}}

\newcommand{\Hdot}{\dot{H}}

\DeclareMathOperator*{\esssup}{ess\,sup}
\DeclareMathOperator*{\Div}{div}
\makeatother

\title[On wellposedness and limiting behavior of generalized SQG equations]{On wellposedness and limiting behavior of generalized SQG equations}
\author{Anuj Kumar$^{1,\dagger}$}
\address{$^1$Department of Mathematics,
Indian Institute of Technology Jodhpur, India}
\address{$\dagger$ corresponding author}
\email[A Kumar]{anujkumar@iitj.ac.in}
\date{\today}
\thanks{}
\begin{document}
\begin{abstract}
	We consider the two-dimensional generalized surface quasi-geostrophic equations in $\mathbb{R}^2$, given by
    \[\partial_t \theta+u\cdot \nabla \theta=0,\quad u=-\nabla^{\perp}\Lambda^{\beta-2}\theta,\,\beta\in [1,2).\] When $\beta=1$, the equation defines the SQG equation and for $\beta>1$, it defines a family of more singular active scalar equations. We prove that if the interval of existence of the smooth solution to the generalized SQG equations for some $\beta_0\in[1,2)$ is $[0,T]$, then with the same initial data, the interval of existence of the generalized SQG equations for $\beta$ close to $\beta_0$ also contains $[0,T]$. To prove these results, we develop estimates for a conservation law with flux modified around the generalized SQG equations. Furthermore, we also prove some new commutator estimates with bounds uniformly bounded as $\beta\to 1$ that may be of independent interest.
\end{abstract}
\maketitle
{\noindent \small {\it {\bf Keywords: Generalized surface quasi-geostrophic (gSQG) equation, smooth solution, Sobolev spaces}
} \\
 {\it {\bf MSC 2020 Classifications:} 76B03, 35Q35, 35Q86
  } }
\section{Introduction}
The main equation of interest in this paper is the two-dimensional (2D) generalized surface quasi-geostrophic (gSQG) equations given by
\begin{align}\label{gSQG}
    \begin{cases}
        &\bdy_t \tht+u\cdot \nabla \tht=0,\\
       &u=\nabla^{\perp}\psi=\left(-\bdy_{x_2}\psi,\bdy_{x_1}\psi\right),\\
        &\Delta \psi=\Lam^{\be}\tht,\quad x\in \mathbb{R}^2,\,t>0.
    \end{cases}
\end{align}
In the above equation, $\tht(x,t)$ denotes the evolving scalar, $u(x,t)$ denotes the advecting velocity field, and $\psi(x,t)$ its corresponding stream function. The operator $\Lam$ denotes the fractional Laplacian operator $(-\Delta)^{1/2}$, with Fourier multiplier $|\xi|$. The parameter $\beta\in (0,2)$ quantifies the singularity in the constitutive laws. We assume the domain to be $\RR^2$, and consider the Cauchy problem with the initial data $\tht(x,0)=\tht_0$. \par
The family of equations in \eqref{gSQG} for the range $\be \in (0,1]$ was introduced in \cite{ChaeConstantinWu2011}, while the case of $\be\in (1,2)$ was first studied in \cite{ChaeConstantinCordobaGancedoWu2012}. 
For $\be \in [0,1]$, \eqref{gSQG} interpolate between the 2D Euler equation in vorticity form ($\be=0$) and the surface quasi-geostrophic equation (SQG) ($\be=1$). On the other hand, for $\be \in (1,2)$, \eqref{gSQG} represents a family of active scalar equations with increasingly more singular constitutive laws than the SQG equation.\par
In this paper, we consider \eqref{gSQG} in the regime defined by $\be\in [1,2)$ and study the behavior of solutions as $\be \to \be_0$, where $\be_0$ is allowed to be any value in $[1,2)$. More specifically, we establish the fact that the interval of existence of smooth solutions to \eqref{gSQG} is non-decreasing under small perturbations of the parameter $\be$. The main results are stated in \cref{main:T:1} for the case of $\be_0 \in(1,2)$ and in \cref{main:T:2} for the case of $\be_0=1$. Note that $\be_0=1$ case represents the study of solutions for small perturbations around the SQG equation as $\be \to 1^{+}$. The corresponding results for $\be\to 1^{-}$ were established in \cite{YuZhengJiu2019}. Therefore, the result obtained here, along with that of \cite{YuZhengJiu2019} provides a complete understanding of the stability behavior of the SQG equation under small perturbations in the parameter $\be$.\par
The case of the SQG equation ($\be=1$) describes the evolution of surface temperature or buoyancy in a rapidly rotating, stably stratified fluid with potential vorticity. It is a fundamental equation in geophysics and meteorology and has attracted a lot of attention in the last three decades, especially due to properties such as vortex stretching that are similar to those of the 3D Euler equation in vorticity form \cite{ConstantinMajdaTabak1994,Cordoba1998}. Additionally, it also exhibits features of 2D turbulence similar to those of the 2D Euler equation \cite{MajdaTabak1996}. From a purely mathematical perspective, the SQG equation is also an important toy model for studying well-posedness related questions. Depending on the balance between the dissipative term of the form $\Lam^\al \tht$ and the nonlinear term, the dissipative SQG equation is usually divided into three regimes: subcritical ($\al>1$), critical ($\al=1$), and supercritical ($\al<1$). Despite being a 2D model, the question of global regularity for the dissipative SQG equation remains an outstanding open problem in the supercritical regime even though global regularity has been established in the subcritical \cite {ConstantinWu1999,Resnick1995} and critical regimes. In particular, the global regularity in the case of critical dissipation $\al=1$ remained open for a long time until it was settled independently by several sophisticated methods \cite{CaffarelliVasseur2010,ConstantinVicol2012,KiselevNazarovVolberg2007}. In the supercritical regime, several results on local well-posedness for large data, global well-posedness for small data, and Gevrey class smoothing of solutions in time have been established \cite{Miura2006, ChenMiaoZhang2007,HmidiKeraani2007, Wu2004, Biswas2014, BiswasMartinezSilva2015, Dong2010}.\par
The Cauchy problem for the regime $\be\in (1,2)$ was first addressed in \cite{ChaeConstantinCordobaGancedoWu2012}, where local well-posedness was established in $H^4$. Subsequently, this result was sharpened to show that it suffices to have the initial data in $H^{\be+1+\eps}$ for recovering local well-posedness \cite{HuKukavicaZiane2015, Li2019}. In the presence of fractional dissipation of the form $\Lam^\al \tht$, local well-posedness in the critical space $H^{\be+1-\al}$ as well as Gevrey class regularity of solutions were established by the present author with M.S. Jolly and V.R. Martinez \cite{JollyKumarMartinez2020a}. The central idea of the proof was the use of an adapted approximation scheme with flux suitably modified to exploit the cancellation properties of \eqref{gSQG}, which appear to be instrumental in controlling the nonlinearity, as observed in previous studies on \eqref{gSQG}   \cite{ChaeConstantinCordobaGancedoWu2012, HuKukavicaZiane2015}. In this manuscript, we use a similar approach of developing estimates for an equation of the form of a conservation law with flux modified around the gSQG equations (see \eqref{conservation:law}). One can think of it as a perturbation of the gSQG equations so that the nonlinearity appears in the form of a commutator, thus affording inherent cancellation properties associated with \eqref{gSQG}. \par
Several recent studies have investigated the question of well-posedness for \eqref{gSQG} in the borderline space  $H^{\be+1}$. This question for the Euler endpoint $\be=0$ was settled in the seminal work of J. Bourgain and D. Li\cite{BourgainLi2015}, where a mechanism for norm inflation was developed to prove ill-posedness in $H^1$ (see also \cite{ElgindiMasmoudi2020,Kwon2020}). Recently, in a series of works, ill-posedness has also been established in the range $\be\in (0,2)$ by D. C$\acute{\text{o}}$rdoba and L. Mart$\acute{\text{i}}$nez-Zoroa \cite{CordobaZoroa-Martinez2021, CordobaZoroa-Martinez2022} and I.-J. Jeong and J. Kim. Complementary results on the impossibility of uniform continuity of the solution operator have also been obtained for Euler in \cite{BourgainLi2019,HimonasMisiolek2010,Inci2015,MisiolekYoneda2016,MisiolekYoneda2018}, SQG in \cite{Inci2018}, and gSQG in \cite{MisiolekVu2024}. In contrast to these results, some studies have investigated the possibility of regularizing the velocity just enough to recover well-posedness in $H^{\be+1}$. D. Chae and J. Wu showed that a mild regularization using a negative power of a logarithmic multiplier suffices when $\be \in[0,1]$. The complementary result when $\be\in(1,2)$ was subsequently obtained in \cite{JollyKumarMartinez2020b}. \par
Henceforth, we will use the notation that $\{u^\be,\tht^\be\}$ and $\{u^{\be_0
},\tht^{\be_0}\}$ denote the solutions of \eqref{gSQG} with constitutive law parameters $\be$ and $\be_0$ respectively.
Let $\be_0 \in [1,2)$ and $T>0$. In this paper, we aim to show that if $\tht^{\be_0}$, defines a unique smooth solution on $[0,T]$, then there exists $\delta>0$ such that for any $\be \in (1,2)$ satisfying $0<|\be-\be_0|<\delta$, the Cauchy problem for \eqref{gSQG} with the same initial data and constitutive law parameter $\be$ also possesses a unique smooth solution defined on $[0,T]$. This study is motivated by the results in \cite{YuZhengJiu2019} and \cite{Constantin1986Euler}. In \cite{YuZhengJiu2019}, the corresponding results for the case $\be_0\in [0,1]$ and $\be \in (0,1)$ were established, while in \cite{Constantin1986Euler}, it was shown that if the Cauchy problem for the 3D incompressible Euler equation has a unique smooth solution on some interval $[0,T]$, then so does the corresponding 3D incompressible Navier-Stokes equation with the same initial data if the viscosity is assumed sufficiently small. \par
Let us denote by 
\[\overline{u}=u^\be-u^{\be_0}\quad \text{and}\quad\overline{\tht}=\tht^{\be}-\tht^{\be_0}.\] Then, as observed in \cite{YuZhengJiu2019}, the main difficulty of the proof lies in determining how to use the information of $\Sob{\overline{\tht}(t)}{H^s}$ to control $\overline{u}$ in the energy arguments. 
In \cite{YuZhengJiu2019}, the authors considered the decomposition of $\overline{u}$ as
\begin{align*}
\overline{u}&=\nabla^\perp \Lam^{\be-2}\overline{\tht}+(\nabla^\perp \Lam^{\be-2}-\nabla^\perp \Lam^{\be-2})\tht^{\be_0}\\
&=\overline{u}_I+\overline{u}_{II}.
\end{align*}
They then estimated $\overline{u}_I$ directly using Hardy-Littlewood-Sobolev inequality when $\be_0 \in (0,1)$ and established new product estimates for when $\be_0=0,1$. This recourse is no longer available to us because the constitutive laws are more singular when $\be_0 \in (1,2)$ or when $\be_0=1$ and $\be \to 1^{+}$. Therefore, one must use a more delicate approach which consists of decomposing the nonlinear terms in terms of commutators that provide the requisite control. In addition, when $\be \to 1^{+}$, we need to develop new bounds (see \cref{commutator:estimate:1}) that remain valid in the limit, albeit at some cost of additional regularity of the initial data (see \cref{main:T:2}). We now state our main results. Note that $\tht^\be, \tht^{\be_0}$ denote the solutions to \eqref{gSQG} with constitutive law parameters $\be$ and $\be_0$, respectively.
\begin{theorem}\label{main:T:1}
    Let $\be_0 \in (1,2)$. Let $\tht^{\be_0}(x,t)$ denote the solution to \eqref{gSQG} with initial data $\tht_0 \in H^{s+1} (s>3)$, valid on some interval $[0,T]$. Then, there exists $\delta>0$ depending on $T$ and $\int_0^T \Sob{\tht^{\be_0}}{H^{s+1}}\,dt$ such that if $\be \in (1,2)$ and $|\be-\be_0|< \delta$, the solution $\tht^{\be}(x,t)$ to \eqref{gSQG} with the same initial data is also smooth on $[0,T]$. Moreover, it holds that
    \begin{align*}
        \Sob{\tht^{\be}(t)-\tht^{\be_0}(t)}{H^s}\le C(|\be-\be_0|^{2-\be}+|\be-\be_0|^{2-\be_0}+|\be-\be_0||\ln|\be-\be_0||),
    \end{align*}
    where $C$ is a positive constant depending on $T$ and $\int_0^T\Sob{\tht^{\be_0}}{H^{s+1}}\,dt$.
\end{theorem}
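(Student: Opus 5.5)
We argue by a continuation (bootstrap) argument for the difference $\overline{\tht}=\tht^\be-\tht^{\be_0}$ in $H^s$, in the spirit of \cite{Constantin1986Euler,YuZhengJiu2019}. By the local well-posedness theory for \eqref{gSQG} with $\be\in(1,2)$ (e.g.\ \cite{HuKukavicaZiane2015,Li2019}), $\tht^\be$ exists and is smooth on some maximal interval $[0,T^*_\be)$. Blow-up in $H^{s}$ (indeed in $H^{\be+1+\eps}$, and $s>3>\be+1$) is the only obstruction to continuation. So it suffices to show that, as long as $\Sob{\overline{\tht}(t)}{H^s}\le 1$ on $[0,t_1]\subset[0,T]$, the bound of the theorem holds there. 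Since the right-hand side is then $\le 1/2$ for $\delta$ small, a continuity argument gives $T^*_\be>T$.

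The difference satisfies
\[
\bdy_t\overline{\tht}+u^\be\cdot\nabla\overline{\tht}+\overline{u}\cdot\nabla\tht^{\be_0}=0,
\]
and we split $\overline{u}=\overline{u}_I+\overline{u}_{II}$ with
\[
\overline{u}_I=-\nabla^\perp\Lam^{\be-2}\overline{\tht},\qquad \overline{u}_{II}=-\nabla^\perp(\Lam^{\be-2}-\Lam^{\be_0-2})\tht^{\be_0}.
\]
We apply $\Lam^s$, pair with $\Lam^s\overline{\tht}$ and estimate four groups of terms.

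First, the transport term $\langle\Lam^s(u^\be\cdot\nabla\overline{\tht}),\Lam^s\overline{\tht}\rangle$ cannot be bounded by a Kato--Ponce commutator, since $u^\be$ costs $\be-1>0$ derivatives more than $\tht^\be$. Instead we write $u^\be=\nabla^\perp\Lam^{\be-2}\overline{\tht}+u^{\be}_{0}$ with $u^\be_0=-\nabla^\perp\Lam^{\be-2}\tht^{\be_0}$, which is smooth with $\Sob{u^\be_0}{H^{s+1-(\be-1)}}\lesssim\Sob{\tht^{\be_0}}{H^{s+1}}$. The $u^\be_0$ part is handled by the standard commutator estimate. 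For the self-interaction part $\langle\Lam^s(\nabla^\perp\Lam^{\be-2}\overline{\tht}\cdot\nabla\overline{\tht}),\Lam^s\overline{\tht}\rangle$ we use the gSQG cancellation of \cite{ChaeConstantinCordobaGancedoWu2012,HuKukavicaZiane2015}. The most singular piece $\langle \nabla^\perp\Lam^{\be-2}\Lam^s\overline{\tht}\cdot\nabla\overline{\tht},\Lam^s\overline{\tht}\rangle$ is rewritten via the antisymmetry of $\nabla^\perp\Lam^{\be-2}$ as a commutator $\langle[\Lam^{\be-2}\nabla^\perp,\nabla\overline{\tht}]\Lam^s\overline{\tht},\Lam^s\overline{\tht}\rangle$ gaining one derivative. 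This gives $\lesssim\Sob{\overline{\tht}}{H^s}^3$ using $s>3>\be+1$. This is the step where the modified-flux/commutator structure of \eqref{conservation:law} enters, and I would invoke the commutator estimates of \cref{commutator:estimate:1} type here.

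Second, $\overline{u}_I\cdot\nabla\tht^{\be_0}$. Here the derivative loss falls on $\overline{\tht}$ through $\Lam^{s}\overline{u}_I\sim\Lam^{s+\be-1}\overline{\tht}$, which is not controlled in $H^s$. We exploit that $\tht^{\be_0}\in H^{s+1}$. The worst term $\langle\Lam^s\overline{u}_I\cdot\nabla\tht^{\be_0},\Lam^s\overline{\tht}\rangle$ is again symmetrized: with $\Lam^{s}\overline{\tht}=f$ it reads $\langle\nabla^\perp\Lam^{\be-2}f\cdot\nabla\tht^{\be_0},f\rangle$, and antisymmetry yields a commutator $[\Lam^{\be-2}\nabla^\perp,\nabla\tht^{\be_0}]$ of order $\be-2$. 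The result is bounded by $C\Sob{\tht^{\be_0}}{H^{s+1}}\Sob{\overline{\tht}}{H^s}^2$. Third, the forcing $\overline{u}_{II}\cdot\nabla\tht^{\be_0}$ is controlled by $\Sob{\overline{u}_{II}}{H^s}\Sob{\tht^{\be_0}}{H^{s+1}}$. A multiplier computation gives
\[
\big||\xi|^{\be-1}-|\xi|^{\be_0-1}\big|\lesssim |\be-\be_0|\,|\ln|\xi||\,\max(|\xi|^{\be-1},|\xi|^{\be_0-1}).
\]
Splitting frequencies at $|\xi|\sim|\be-\be_0|^{-1}$ (high) and treating low frequencies $|\xi|\le 1$ crudely, we bound $\Sob{\overline{u}_{II}}{H^s}$ by $C(|\be-\be_0|^{2-\be}+|\be-\be_0|^{2-\be_0}+|\be-\be_0||\ln|\be-\be_0||)\Sob{\tht^{\be_0}}{H^{s+1}}$. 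This uses that only one extra derivative is available, so the high-frequency part costs $|\be-\be_0|^{2-\be}$; this is the source of the exponents in the statement.

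Collecting the terms gives, with $y=\Sob{\overline{\tht}}{H^s}$, $g=\Sob{\tht^{\be_0}}{H^{s+1}}$ and $\epsilon(\be)$ the above rate,
\[
\frac{d}{dt}y\le C(1+g)(y+y^2)+C\epsilon(\be)\,g,\qquad y(0)=0.
\]
While $y\le1$, Grönwall gives $y(t)\le C\epsilon(\be)\int_0^Tg\,e^{C(T+\int_0^Tg)}$, which closes the bootstrap. I expect the main obstacle to be the first two groups, i.e.\ the precise commutator bounds absorbing the $\be-1$ derivative loss without exceeding $H^{s+1}$ on $\tht^{\be_0}$. The frequency-splitting for $\overline{u}_{II}$ is the other delicate but routine point.
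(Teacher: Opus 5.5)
Your argument is correct in substance, but it organizes the nonlinearity differently from the paper.

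\textbf{Nonlinear terms.} The paper expands the difference through the modified-flux law \eqref{conservation:law}, where $\Div F^\be_q(\tht)=-[\bdy^\perp_\ell\Lam^{\be-2},\bdy_\ell\tht]q$. It writes
$\Div F^\be_{-\overline{\tht}}(\overline{\tht})+\Div F^\be_{-\tht^{\be_0}}(\overline{\tht})+\Div F^\be_{-\overline{\tht}}(\tht^{\be_0})+\mathcal{H}^{\be,\be_0}(\tht^{\be_0})\cdot\nabla\tht^{\be_0}$.
Compared with your $u^\be\cdot\nabla\overline{\tht}+\overline{u}_I\cdot\nabla\tht^{\be_0}+\overline{u}_{II}\cdot\nabla\tht^{\be_0}$, this adds and subtracts $\Lam^{\be-2}(\nabla^\perp\overline{\tht}\cdot\nabla\tht^{\be_0})$, so every piece is a commutator. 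The paper can then bound the cross term $\Div F^\be_{-\overline{\tht}}(\tht^{\be_0})$ bilinearly in $H^s$ (\cref{lem:law:2}). It treats the transport of $\overline{\tht}$ by the $\tht^{\be_0}$-velocity with the trilinear \cref{lem:law:1}, which needs only $\Sob{\tht^{\be_0}}{H^s}$.

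You instead treat $u^\be_0\cdot\nabla\overline{\tht}$ as transport by a smooth divergence-free field, via Kato--Ponce with $\Sob{\Lam^s u^\be_0}{L^2}\lesssim\Sob{\tht^{\be_0}}{H^{s+\be-1}}$. You handle $\overline{u}_I\cdot\nabla\tht^{\be_0}$ by the classical skew-adjointness symmetrization: $[\bdy^\perp_\ell\Lam^{\be-2},\bdy_\ell\tht^{\be_0}]$ is controlled by \cref{lem:Li2019}, and $[\Lam^s,\bdy_\ell\tht^{\be_0}]\bdy^\perp_\ell\Lam^{\be-2}\overline{\tht}$ by Kato--Ponce. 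This closes for $s>3$ and is more elementary. It shows that for $\be_0\in(1,2)$ the modified flux is a convenience rather than a necessity. The paper's uniform commutator form pays off in \cref{main:T:2}, where the $\be$-uniform \cref{commutator:estimate:1} and the $L^1$ bound slot into the same lemmas.

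\textbf{Forcing term.} The paper splits the kernel of $\mathcal{H}^{\be,\be_0}$ in physical space at $\veps=|\be-\be_0|$ and at $1$, and sets the $\be$-dependent constant $C_\be$ to $1$. Your multiplier route avoids that bookkeeping and in fact needs no frequency splitting at all. By the mean value theorem in the exponent,
$(1+|\xi|^2)^{-1/2}\big||\xi|^{\be-1}-|\xi|^{\be_0-1}\big|\le|\be-\be_0|\max\{\sup_{r\ge1}r^{\max(\be,\be_0)-2}\ln r,\ \sup_{r\le1}r^{\min(\be,\be_0)-1}|\ln r|\}$,
and both suprema are finite exactly because $1<\be,\be_0<2$. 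Hence $\Sob{\overline{u}_{II}}{H^s}\le C|\be-\be_0|\Sob{\tht^{\be_0}}{H^{s+1}}$, which implies and improves the stated rate. So the exponents $2-\be$, $2-\be_0$ are an artifact of the splitting, not of having only one extra derivative. The same display shows what your "crude" low-frequency treatment must be: the factor $|\be-\be_0|$ has to survive there, and it does only because $\min(\be,\be_0)>1$.

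\textbf{Minor slips.}
\begin{itemize}
\item The sign should be $u^\be=-\nabla^\perp\Lam^{\be-2}\overline{\tht}+u^\be_0$.
\item The forcing in your differential inequality is $C\epsilon(\be)g^2$, not $C\epsilon(\be)g$. This is harmless since $g$ is bounded on $[0,T]$, and the paper's $A(t)$ carries the same $g^2$.
\item You should record the $L^2$ part of the $H^s$ norm, though it is immediate.
\item The continuation criterion you quote from the local theory is what the paper verifies directly via \eqref{est:sigma>4}. Your bootstrap plays the role of \cref{lem:Gronwall:variant}.
\end{itemize}
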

For the end point case $\be_0=1$, our result is as follows:
\begin{theorem}\label{main:T:2}
    Let $1=\be_0<\be<2$. Let $\tht^{\be_0}(x,t)$ denote the solution to \eqref{gSQG} with  initial data $\tht_0 \in H^{s+1}\cap L^1 (s>3)$, valid on some interval $[0,T]$. Then, there exists $\delta>0$ depending on $T$ and $\int_0^T \Sob{\tht^{\be_0}}{H^{s+1}\cap L^1}\,dt$ such that if $0<\be-\be_0\le \delta$, the solution $\tht^{\be}(x,t)$ to \eqref{gSQG} with the same initial data is also smooth on $[0,T]$. Moreover, it holds that
    \begin{align*}
      \Sob{\tht^{\be}(t)-\tht^{\be_0}(t)}{H^s}\le C((\be-1)^{2-\be}+(\be-1)|\ln |\be-1||),
    \end{align*}
    where $C$ is a positive constant depending on $T$ and $\int_0^T\Sob{\tht^{\be_0}}{H^{s+1}\cap L^1}\,dt$.
\end{theorem}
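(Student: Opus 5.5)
We argue exactly as for \cref{main:T:1}, with two changes. The $L^1$ assumption on $\tht_0$ is used to control the low frequencies of the velocity difference. A commutator estimate whose constant stays bounded as $\be\to1^{+}$ replaces the estimate used there.

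\textbf{Setup and decomposition.} We take $\tht^{\be}$ as the local smooth solution in $H^{s+1}$ given by the local theory of \cite{HuKukavicaZiane2015, Li2019}, on a maximal interval $[0,T^*)$. Write $\overline{\tht}=\tht^\be-\tht^{1}$. Then
\[
\bdy_t\overline{\tht}+u^\be\cdot\nabla\overline{\tht}+\overline{u}\cdot\nabla\tht^{1}=0,
\qquad
\overline{u}=\nabla^\perp\Lam^{\be-2}\overline{\tht}+\nabla^\perp(\Lam^{\be-2}-\Lam^{-1})\tht^{1}.
\]
We apply $\Lam^s$ and pair with $\Lam^s\overline{\tht}$; the transport term $u^\be\cdot\nabla\Lam^s\overline{\tht}$ vanishes by incompressibility. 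The remaining terms are handled as follows.

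\begin{enumerate}[label=(\roman*)]
\item \emph{Commutator $[\Lam^s,u^\be\cdot\nabla]\overline{\tht}$.} Writing $u^\be=\overline{u}+u^1$, this term is bounded by $C\|\tht^1\|_{H^{s+1}}\|\overline{\tht}\|_{H^s}^2+C\|\overline{\tht}\|_{H^s}^3$, using the Kato--Ponce estimate with $s>3$ and $\be<2$.
\item \emph{Singular piece $\nabla^\perp\Lam^{\be-2}\overline{\tht}\cdot\nabla\tht^1$.} This term carries $\be-1$ more derivatives on $\overline{\tht}$ than $H^s$. Following the modified flux/commutator structure (the conservation law \eqref{conservation:law}), we rewrite the worst part as
\[
\int \Lam^s\big(\nabla^\perp\Lam^{\be-2}\overline{\tht}\cdot\nabla\tht^1\big)\Lam^s\overline{\tht}.
\]
We symmetrize using the antisymmetry of $\nabla^\perp\Lam^{\be-2}$, so that a commutator $[\Lam^{\be-2}\nabla^\perp,\nabla\tht^1]$ appears. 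This gains one derivative, which suffices since $\be-1<1$.
\item \emph{Constant in (ii).} We invoke \cref{commutator:estimate:1}, whose constant is uniform as $\be\to1^+$. It gives a bound $C\|\tht^1\|_{H^{s+1}}\|\overline{\tht}\|_{H^s}^2$ with $C$ independent of $\be\in(1,1+\delta]$.
\item \emph{Forcing piece $\nabla^\perp(\Lam^{\be-2}-\Lam^{-1})\tht^1\cdot\nabla\tht^1$.} Its multiplier is $|\xi|^{-1}(|\xi|^{\be-1}-1)$. For $|\xi|\ge1$ we use $\big||\xi|^{\be-1}-1\big|\le(\be-1)|\xi|^{\be-1}\ln|\xi|$, together with the elementary bound $|\xi|^{\be-1}\ln|\xi|\lesssim|\xi|^{1-\be}$-weighted interpolation. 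Optimizing the frequency cutoff produces the rate $(\be-1)|\ln(\be-1)|+(\be-1)^{2-\be}$ in $H^{s}$, using one extra derivative of $\tht^1$ (hence $H^{s+1}$).
\item \emph{Low frequencies $|\xi|\le1$.} Here $|\xi|^{-1}$ is not square integrable near $0$ relative to $L^2$ data. We therefore bound $\|\hat\tht^1\|_{L^\infty}\le\|\tht^1\|_{L^1}=\|\tht_0\|_{L^1}$, which holds because transport by a divergence-free field preserves $L^1$. This gives
\[
\int_{|\xi|\le1}|\xi|^{-2}\big(|\xi|^{\be-1}-1\big)^2|\xi|^2\,d\xi\lesssim(\be-1)^2\ln^2(\be-1),
\]
after multiplying by $|\nabla\tht^1|$ bounds. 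This is the only place the $L^1$ hypothesis enters.
\end{enumerate}

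\textbf{Closing the argument.} Collecting the estimates, with $\epsilon=(\be-1)^{2-\be}+(\be-1)|\ln(\be-1)|$,
\[
\frac{d}{dt}\|\overline{\tht}\|_{H^s}\le C\|\tht^1\|_{H^{s+1}\cap L^1}\big(\|\overline{\tht}\|_{H^s}+\|\overline{\tht}\|_{H^s}^2+\epsilon\big),
\qquad
\overline{\tht}(0)=0.
\]
A bootstrap argues as follows: while $\|\overline{\tht}\|_{H^s}\le1$, Gronwall gives $\|\overline{\tht}(t)\|_{H^s}\le C\epsilon\, e^{C\int_0^T\|\tht^1\|_{H^{s+1}\cap L^1}dt}$. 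Choosing $\delta$ so that this bound is less than $1/2$ closes the bootstrap. Hence $\|\tht^\be\|_{H^s}$ stays bounded on $[0,\min(T,T^*))$. The Beale--Kato--Majda-type continuation criterion for gSQG (controlled by $\|\nabla u^\be\|_{L^\infty}+\|\nabla\tht^\be\|_{L^\infty}$, both bounded by $H^s$ with $s>3$) then yields $T^*>T$ and the stated estimate.

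\textbf{Main obstacle.} The hardest step is (ii)--(iii): showing that the commutator bound for the singular piece does not degenerate as $\be\to1^+$. Standard estimates carry constants like $1/(\be-1)$ from the kernel of $\Lam^{\be-2}\nabla^\perp$. I would first try to prove \cref{commutator:estimate:1} by writing the commutator kernel explicitly and using a Taylor expansion of $\nabla\tht^1$. This gives a uniformly integrable kernel, at the cost of one extra derivative, which is why we assume $H^{s+1}$ data.
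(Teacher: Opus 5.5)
Your step (i) is false as stated, and this is a genuine gap: it is exactly the derivative loss that makes $\be>1$ hard. Up to sign, $u^\be=\nabla^\perp\Lam^{\be-2}\overline{\tht}+\nabla^\perp\Lam^{\be-2}\tht^1$. So $[\Lam^s,u^\be\cdot\nabla]\overline{\tht}$ contains the top-order piece $(\nabla^\perp\Lam^{\be-2}\Lam^s\overline{\tht})\cdot\nabla\overline{\tht}$. Kato--Ponce only bounds its contribution by $\Sob{\overline{\tht}}{\Hdot^{s+\be-1}}\Sob{\nabla\overline{\tht}}{L^\infty}\Sob{\overline{\tht}}{\Hdot^s}$. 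That loses $\be-1>0$ derivatives and is not $\lesssim\Sob{\overline{\tht}}{H^s}^3$. (The $\tht^1$ part is harmless, since $\Sob{\tht^1}{H^{s+\be-1}}\le\Sob{\tht^1}{H^{s+1}}$.)

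The self-interaction needs the same cure as your (ii). With the anti-self-adjoint $A_\ell=\bdy_\ell^\perp\Lam^{\be-2}$ and $f=\Lam^s\overline{\tht}$,
\[
\lb (A_\ell f)\,\bdy_\ell\overline{\tht},f\rb=-\tfrac12\lb [A_\ell,\bdy_\ell\overline{\tht}]f,f\rb,
\]
and \eqref{commutator:1:est:2} bounds this by $C\Sob{\overline{\tht}}{H^s}^3$ uniformly in $\be$. It must be the $\dot{W}^{-1,2}$ version, since \eqref{commutator:1:est:1} would need $\Sob{\Lam^s\overline{\tht}}{L^1}$. The remainder $\Lam^s(a\cdot\nabla\overline{\tht})-a\cdot\nabla\Lam^s\overline{\tht}-(\Lam^s a)\cdot\nabla\overline{\tht}$, with $a=\nabla^\perp\Lam^{\be-2}\overline{\tht}$, then costs only $\Sob{a}{\Hdot^{s-1}}=\Sob{\overline{\tht}}{\Hdot^{s+\be-2}}$. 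This is precisely the paper's $L_1$, i.e.\ \cref{lem:law:1} with $q=\tht=\overline{\tht}$: the same commutator appears as $I_1$ and the remainder as $I_3$.

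You also write only the $\Hdot^s$ estimate. The $L^2$ level is needed too, but it is easy: $\lb u^\be\cdot\nabla\overline{\tht},\overline{\tht}\rb=0$ and $|\lb \overline{u}\cdot\nabla\tht^1,\overline{\tht}\rb|\le(\Sob{\overline{\tht}}{\Hdot^{\be-1}}+\Sob{\mathcal{H}^{\be,1}(\tht^1)}{L^2})\Sob{\nabla\tht^1}{L^\infty}\Sob{\overline{\tht}}{L^2}$.

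Once (i) is repaired, the rest is a sound route that differs from the paper's. The paper builds the cancellation into the equation through the modified flux, where $\Div F^\be_q(\tht)=-[\Lam^{\be-2}\bdy_\ell^\perp,\bdy_\ell\tht]q$ is itself a commutator. The cross term matching your (ii) is then bounded as a whole in $H^s$ by \cref{lem:law:2}. The paper splits the forcing $\mathcal{H}^{\be,1}(\tht^1)$ in physical space, handling the far field with Young plus Gagliardo--Nirenberg between $L^1$ and $\Hdot^{s+1}$.

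Your Fourier treatment is simpler and sharper. The symbol has modulus $\bigl||\xi|^{\be-1}-1\bigr|\le 1$ near $0$; there is no $|\xi|^{-1}$, so the low-frequency issue is lack of smallness, not integrability, though your displayed integral is the right one. Since $\bigl||\xi|^{\be-1}-1\bigr|\le\frac{\be-1}{e(2-\be)}|\xi|$ for $|\xi|\ge1$ and $\int_{|\xi|\le1}(1-|\xi|^{\be-1})^2\,d\xi=O((\be-1)^2)$, you in fact get an $O(\be-1)$ rate.

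Moreover, $L^1$ enters only through $\Sob{\widehat{\tht^1}}{L^\infty}\le\Sob{\tht_0}{L^1}$. So you never meet terms like $\Sob{\tht^1}{H^{s+1}}\Sob{\overline{\tht}}{L^1}\Sob{\overline{\tht}}{H^s}$. These arise in the paper's $K_3$ and $L_3$, and after dividing by $\Sob{\overline{\tht}}{H^s}$ they are bounded but not small in $\be-1$.

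Two smaller points. First, your ``main obstacle'' diagnosis is off. The blow-up as $\be\to1$ comes from the far field, $\int_{|z|>1}|z|^{-1-\be}\,dz=2\pi/(\be-1)$, not from the diagonal. That is why \cref{commutator:estimate:1} pays with $\Sob{f}{L^1}$ or $\Sob{f}{\dot{W}^{-1,p}}$, and a Taylor expansion alone would not prove it; just cite the lemma. Second, continuation needs no BKM criterion: \eqref{est:sigma>4} with $\si=s+1$ already shows that $H^s$ control propagates the $H^{s+1}$ norm.
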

\section{Mathematical preliminaries}
Denote by $\mathcal{S}(\RR^n)$ the space of Schwartz class functions on $\RR^n$ and by $\mathcal{S}'(\RR^n)$ the space of tempered distributions on $\RR^n$. For $p\in [1,\infty]$, we denote by $L^p(\RR^n)$ the space of Lebesgue integrable functions of order $p$ on $\RR^n$. The norm in the space $L^p(\RR^n)$ is defined by
\begin{align*}
    \Sob{f}{L^p(\RR^n)}:=\begin{cases}
        \left(\int_{\RR^n}|f(x)|^p\,dx\right)^{\frac{1}{p}},\quad p\in[1,\infty),\\
        \esssup_{x\in \RR^n}|f(x)|,\quad p=\infty.
    \end{cases}
\end{align*}
Recall that, with this norm, $L^p(\RR^n)$ defines a Banach space. In the particular case $p=2$, it defines a Hilbert space as it can be endowed with the inner product given by
\begin{align*}
    \lb f,g\rb:=\int_{\RR^n}f(x)\overline{g(x)}\,dx.
\end{align*}
Given $f\in \mathcal{S}'(\RR^n)$, we denote by $\widehat{f}$, the Fourier transform of $f$. We denote by
\[\Lam^\si=(-\Delta)^{\frac{\si}{2}},\quad J^\si=(I-\Delta)^{\frac{\si}{2}},\]
where
\[\widehat{\Lam^\si f}(\xi)=|\xi|^\si \widehat{f}(\xi),\quad\text{and}\quad \widehat{J^\si f}(\xi)=(1+|\xi|^2)^{\frac{\si}{2}}\widehat{f}(\xi),\]
for $\si\in \RR$.
For $\si \in \RR$ and $p\in [1,\infty]$, the homogeneous and the inhomogeneous Sobolev spaces on $\RR^n$ are defined as
\begin{align*}
    \dot{W}^{\si,p}(\RR^n):=\left\{f\in \mathcal{S}'(\RR^n):\, \Sob{f}{\dot{W}^{\si,p}(\RR^n)}:=\Sob{\Lam^\si f}{L^p(\RR^n)}<\infty\right\},\\
    W^{\si,p}(\RR^n):=\left\{f\in \mathcal{S}'(\RR^n):\, \Sob{f}{{W}^{\si,p}(\RR^n)}:=\Sob{J^\si f}{L^p(\RR^n)}<\infty\right\}.
\end{align*}
Since for $\si \ge 0$, we have
\[2^{-\si}\{1+|\xi|^2\}^{\si}\le 1+|\xi|^{2\si}\le 2^\si\{1+|\xi|^2\}^\si,\quad \forall \xi\in \mathbb{R}^2.\]
Hence, we will use the following equivalent definition of the inhomogeneous Sobolev spaces $H^{\si}$ when $\si\ge0$:
\begin{align}
    H^\si(\mathbb{R}^2){:=}\left\{f\in \mathcal{S}'(\RR^2): \Sob{f}{H^\si}^2{:=}\Sob{f}{L^2}^2+\Sob{f}{\Hdot^{\si}}^2<\infty\right\}.\label{def:inhom:Sob:norm}
\end{align}
We recall the Sobolev embedding theorem (see \cite{BahouriCheminDanchinBook2011}).
\begin{lemma}\label{lem:Sob:embedding}
    Let $p_2\neq \infty$. Then, we have $W^{\si_1,p_1}\hookrightarrow W^{\si_2,p_2}$ if and only if
    \[\si_1-\frac{n}{p_1}\ge \si_2-\frac{n}{p_2},\quad \frac{1}{p_1}\ge \frac{1}{p_2}.\]
\end{lemma}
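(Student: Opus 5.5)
We treat the embedding $W^{\si_1,p_1}\hookrightarrow W^{\si_2,p_2}$ in the two directions separately, writing $J^{\si}$ for the Bessel potential.

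For sufficiency, assume $\si_1-\frac{n}{p_1}\ge \si_2-\frac{n}{p_2}$ and $\frac1{p_1}\ge \frac1{p_2}$, and set $\si=\si_1-\si_2\ge n(\frac1{p_1}-\frac1{p_2})\ge 0$. Since $J^{\si_2}$ commutes with $J^{-\si}$, we have $J^{\si_2}f=J^{-\si}(J^{\si_1}f)$. It therefore suffices to show that $J^{-\si}$ is bounded from $L^{p_1}$ to $L^{p_2}$. The Bessel kernel $G_\si$ of $J^{-\si}$ is positive and integrable. It is bounded by $C|x|^{\si-n}$ near the origin (with a logarithm when $\si=n$) and decays exponentially at infinity.

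There are three cases. If $p_1=p_2$, Young's inequality with $G_\si\in L^1$ gives the bound. If $p_1<p_2$ and $\si=n(\frac1{p_1}-\frac1{p_2})$, then $G_\si\lesssim |x|^{\si-n}$ and the Hardy--Littlewood--Sobolev inequality applies; this needs $1<p_1<p_2<\infty$, and $p_2<\infty$ is assumed. If the inequality is strict, $G_\si\in L^r$ with $1+\frac1{p_2}=\frac1r+\frac1{p_1}$, because $r(n-\si)<n$; Young's inequality then applies.

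The main obstacle is the endpoint $p_1=1$ with equality in the scaling condition. There HLS fails and we only get a weak-type estimate. For this case I would use the Littlewood--Paley/Besov route of \cite{BahouriCheminDanchinBook2011}: Bernstein's inequality $\Sob{\Delta_j f}{L^{p_2}}\lesssim 2^{jn(\frac1{p_1}-\frac1{p_2})}\Sob{\Delta_j f}{L^{p_1}}$, together with embeddings between Besov and Triebel--Lizorkin spaces, or Gagliardo--Nirenberg type arguments when the $\si_i$ are integers. I would either settle for this or accept that the sharp endpoint needs those tools.

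For necessity, test the embedding on dilations $f_\lambda(x)=\phi(\lambda x)$ with $\phi\in\mathcal{S}$ and $\hat\phi$ supported in an annulus. For large $\lambda$,
\[
\Sob{f_\lambda}{W^{\si,p}}\sim \lambda^{\si-\frac np}\Sob{\phi}{L^p}.
\]
Letting $\lambda\to\infty$ in $\Sob{f_\lambda}{W^{\si_2,p_2}}\lesssim \Sob{f_\lambda}{W^{\si_1,p_1}}$ forces $\si_2-\frac n{p_2}\le \si_1-\frac n{p_1}$.

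For $\frac1{p_1}\ge\frac1{p_2}$, use the translation-sum argument. Take $f_N=\sum_{k=1}^N \phi(\cdot-x_k)$ with the translates $x_k$ far apart. Then $\Sob{f_N}{W^{\si,p}}\approx N^{1/p}$, and $N^{1/p_2}\lesssim N^{1/p_1}$ for all $N$ gives $p_2\ge p_1$. The approximation error comes from the Schwartz tails of $J^\si\phi$ and vanishes as the separation tends to infinity.
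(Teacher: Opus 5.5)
\textbf{The gap is the endpoint case you left open, and that case cannot be closed because the statement is false there.}

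Your necessity arguments are correct: dilations of a $\phi$ with $\hat\phi$ supported in an annulus, and sums of widely separated translates. Your sufficiency argument is also correct whenever $p_1>1$, or $p_1=1$ with strict inequality in the first condition. Those cases are covered by Young's inequality with $G_\si\in L^1$ or $G_\si\in L^r$, and by Hardy--Littlewood--Sobolev in the scaling-critical case $1<p_1<p_2<\infty$. The paper gives no proof of its own; it only cites Bahouri--Chemin--Danchin.

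The remaining case is $p_1=1<p_2<\infty$ with $\si:=\si_1-\si_2=n(1-\frac{1}{p_2})$. With the paper's definition $W^{\si,p}=\{f\in\mathcal{S}': J^\si f\in L^p\}$, the ``if'' direction fails here.
\begin{itemize}
\item The embedding is equivalent to $\Sob{G_\si*g}{L^{p_2}}\le C\Sob{g}{L^1}$ for all $g\in L^1$.
\item Since $0<\si<n$, the Bessel kernel satisfies $G_\si(x)\ge c|x|^{\si-n}=c|x|^{-n/p_2}$ near the origin, so $G_\si\notin L^{p_2}$.
\item Take a nonnegative approximate identity $g_\veps$ with $\Sob{g_\veps}{L^1}=1$. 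Then $G_\si*g_\veps\to G_\si$ pointwise off the origin, so $\Sob{G_\si*g_\veps}{L^{p_2}}\to\infty$ by Fatou's lemma.
\item The Schwartz functions $f_\veps=J^{-\si_1}g_\veps$ therefore satisfy $\Sob{f_\veps}{W^{\si_1,1}}=1$ and $\Sob{f_\veps}{W^{\si_2,p_2}}\to\infty$.
\end{itemize}
For instance, with $n=2$ the lemma would give $J^{-1}L^1(\RR^2)\hookrightarrow L^2(\RR^2)$. This fails because $G_1(x)\sim c|x|^{-1}$ is not square integrable near $0$.

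Consequently none of the tools you propose can close this case.
\begin{itemize}
\item $L^1$ embeds into $B^0_{1,\infty}$, but (test again on $g_\veps$) into no $B^0_{1,q}$ with $q<\infty$ and into no $F^0_{1,q}$. So neither the Bernstein chain $B^{\si_1}_{1,\infty}\hookrightarrow B^{\si_2}_{p_2,\infty}$ nor any Besov or Triebel--Lizorkin embedding reaches $W^{\si_2,p_2}=F^{\si_2}_{p_2,2}$.
\item The Gagliardo--Nirenberg--Sobolev inequality concerns $\{f:\nabla^k f\in L^1\}$. That is a different space from $J^{-k}L^1$, because the Riesz transforms are unbounded on $L^1$.
\end{itemize}
Instead of deferring this case to ``those tools'', you should have produced the counterexample above. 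You should also have recorded that the lemma needs an extra hypothesis, for example $p_1>1$, or strict inequality in the first condition when $p_1=1$. With that hypothesis your argument is a complete proof. The defect does not affect the paper, which only applies Sobolev embeddings to $L^2$-based spaces.
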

Next, we recall Leibnitz product rule and Kato-Ponce commutator estimates \cite{KatoPonce1988, Li2019}.
\begin{lemma}\label{lem:KP}
    Let $\si>0$ and $1<p<\infty$. Let $p_1,p_2, p_3, p_4 \in (1,\infty]$ be such that
    \[\frac{1}{p}=\frac{1}{p_1}+\frac{1}{p_2}=\frac{1}{p_3}+\frac{1}{p_4}.\]Then
    \begin{align}
        \Sob{J^\si (fg)}{L^p(\RR^n)}&\le C\left(\Sob{f}{W^{\si, p_1}(\RR^n)}\Sob{g}{L^{p_2}(\RR^n)}+\Sob{g}{W^{\si,p_3}(\RR^n)}\Sob{f}{L^{p_4}(\RR^n)}\right),\label{est:Leibnitz}\\
        \Sob{J^\si(fg)-fJ^\si g}{L^p(\RR^n)}&\le C\left(\Sob{f}{W^{\si, p_1}(\RR^n)}\Sob{g}{L^{p_2}(\RR^n)}+\Sob{g}{W^{\si-1,p_3}(\RR^n)}\Sob{\nabla f}{L^{p_4}(\RR^n)}\right)\label{est:KP},
    \end{align}
    where $C$ are constants depending on $\si,n, p_1,p_2,p_3,$ and $p_4$. Furthermore, the above inequalities also hold when $J^\si$ is replaced by $\Lam^\si$.
\end{lemma}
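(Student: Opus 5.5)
The plan is to prove \cref{lem:KP} by Bony's paraproduct decomposition combined with Littlewood--Paley square-function estimates. Fix a dyadic partition of unity $1=\sum_{j\ge -1}\Delta_j$ on the frequency side, with $S_j=\sum_{k<j-1}\Delta_k$. For any pair $f,g$ write
\[
fg=T_fg+T_gf+R(f,g),
\]
where $T_fg=\sum_j S_{j-1}f\,\Delta_jg$ and $R(f,g)=\sum_{|j-k|\le1}\Delta_jf\,\Delta_kg$. Both \eqref{est:Leibnitz} and \eqref{est:KP} will follow from estimating $J^\si$ (or $\Lam^\si$) applied to each piece in $L^p$. Since $1<p<\infty$, I will use the Littlewood--Paley characterization
\[
\Sob{h}{L^p}\sim\Big\|\Big(\sum_j|\Delta_jh|^2\Big)^{1/2}\Big\|_{L^p}
\]
together with the Fefferman--Stein vector-valued maximal inequality. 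Those are the places where the restriction $p\in(1,\infty)$ enters. The exponents $p_i=\infty$ are allowed because the $L^\infty$ factor always appears only through $\sup_j|S_jf|\le C\,Mf$-type bounds or is pulled out pointwise.

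\textbf{Leibniz rule \eqref{est:Leibnitz}.}
\begin{enumerate}[label=(\roman*)]
\item For $T_fg$, each block $S_{j-1}f\,\Delta_jg$ has Fourier support in an annulus of size $2^j$. Hence
\[
\Sob{J^\si T_fg}{L^p}\lesssim\Big\|\Big(\sum_j 2^{2j\si}|S_{j-1}f|^2|\Delta_jg|^2\Big)^{1/2}\Big\|_{L^p}\le\Big\|\sup_j|S_{j-1}f|\,\Big(\sum_j2^{2j\si}|\Delta_jg|^2\Big)^{1/2}\Big\|_{L^p}.
\]
H\"older's inequality then gives the bound $\Sob{f}{L^{p_4}}\Sob{g}{W^{\si,p_3}}$.
\item The term $T_gf$ is handled symmetrically and gives $\Sob{f}{W^{\si,p_1}}\Sob{g}{L^{p_2}}$.
\item For the resonant term $R(f,g)$ the blocks are supported in balls rather than annuli. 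This is the place where $\si>0$ is used: summing $2^{k\si}\sum_{j\ge k-3}|\Delta_jf\,\Delta_jg|$ over $k$ gives a convergent geometric factor $2^{(k-j)\si}$. After that, the estimate reduces to the same square-function bound.
\end{enumerate}
For $\Lam^\si$ in place of $J^\si$ the low-frequency block $\Delta_{-1}$ is treated as part of the homogeneous decomposition, and the argument is identical.

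\textbf{Commutator \eqref{est:KP}.} Write
\[
J^\si(fg)-fJ^\si g=\big[J^\si(T_fg)-T_f J^\si g\big]+J^\si\big(T_gf+R(f,g)\big)-\big(T_{J^\si g}f+R(f,J^\si g)\big).
\]
\begin{enumerate}[label=(\roman*)]
\item The terms in which $f$ carries the high frequency are bounded by $\Sob{f}{W^{\si,p_1}}\Sob{g}{L^{p_2}}$, exactly as in the Leibniz case. For $T_{J^\si g}f$ and $R(f,J^\si g)$, the derivatives on the comparable-or-lower frequency of $g$ are transferred to $f$ blockwise.
\item The main term is the paraproduct commutator $\sum_j[J^\si,S_{j-1}f]\Delta_jg$. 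Using the integral kernel of $J^\si\widetilde\Delta_j$ and the first-order Taylor expansion $S_{j-1}f(y)-S_{j-1}f(x)=\int_0^1(y-x)\cdot\nabla S_{j-1}f(x+\tau(y-x))\,d\tau$, each block is bounded pointwise by
\[
C\,2^{j(\si-1)}\,M(\nabla f)\,M(\Delta_jg).
\]
Square-summing with Fefferman--Stein and H\"older then gives $\Sob{\nabla f}{L^{p_4}}\Sob{g}{W^{\si-1,p_3}}$.
\end{enumerate}

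I expect this last step to be the main obstacle. The delicate point is getting the gain of one derivative uniformly in $j$, which needs the kernel bound $\int|x-y|\,|K_j(x-y)|\,dy\lesssim2^{j(\si-1)}$. A cleaner alternative, which I would try first, is to write the commutator symbol as
\[
\frac{\langle\xi+\eta\rangle^\si-\langle\xi\rangle^\si}{\eta}\cdot\eta\,\hat f(\eta),
\]
with $|\eta|\ll|\xi|$, check that the quotient is a Coifman--Meyer bilinear symbol of order $\si-1$, and invoke the Coifman--Meyer theorem. That route covers every exponent pair except those with $p_i=\infty$, and those cases are the ones that require the maximal-function route above. If needed, I would fall back on citing \cite{KatoPonce1988, Li2019}, where exactly this endpoint bookkeeping is carried out.
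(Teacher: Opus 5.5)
\textbf{Where the sketch fails.} The paper does not prove this lemma; it only cites \cite{KatoPonce1988, Li2019}. Your paraproduct sketch is sound when all $p_i<\infty$. It also works when $p_4=\infty$, and for the Leibniz rule with $p_2=p_4=\infty$ (the case used for $K_4,L_4$). There the $L^\infty$ factor enters only through $S_{j-1}$, or through $J^\si R(f,g)$ where the factor $2^{(k-j)\si}$ saves you.

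It breaks down in the commutator estimate with $p_2=\infty$. This is exactly the configuration of \eqref{est:KP:A}, which the paper uses for $I_3$, $I_5$, $J_2$, $J_3$. The culprit is the resonant term $R(f,J^\si g)=\sum_{|j-k|\le1}\Delta_j f\,\Delta_k J^\si g$. Nothing of positive order acts on the output, so there is no geometric factor, and the blocks are supported only in balls. After moving $J^\si$ onto $f$ blockwise, you must bound $\sum_j (2^{j\si}\Delta_j f)(\Delta_j' g)$ in $L^p$ with $g\in L^\infty$.
\begin{itemize}
\item Pulling $g$ out pointwise leaves $\|\sum_j 2^{j\si}|\Delta_j f|\|_{L^p}$. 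This is an $\ell^1$-type (Triebel--Lizorkin $F^{\si}_{p,1}$) quantity and is not controlled by $\Sob{J^\si f}{L^p}$.
\item Cauchy--Schwarz in $j$ instead needs $(\sum_j|\Delta_j' g|^2)^{1/2}\in L^\infty$. This is false for general $g\in L^\infty$, since the square function maps $L^\infty$ only into BMO.
\item Rebalancing to $\Sob{\nabla f}{L^\infty}\Sob{J^{\si-1}g}{L^p}$ runs into the same obstruction.
\end{itemize}
The same defect affects your steps when $p_1=\infty$ or $p_3=\infty$, because $\|(\sum_j 2^{2j\si}|\Delta_j g|^2)^{1/2}\|_{L^\infty}$ is not bounded by $\Sob{J^\si g}{L^\infty}$. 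So your claim that $L^\infty$ factors only enter through $\sup_j|S_jf|$, or can be pulled out pointwise, is incorrect.

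\textbf{What is missing.} You need the $L^p\times L^\infty\to L^p$ bound for paraproducts and for the resonant operator $(F,G)\mapsto\sum_j\Delta_j'F\,\Delta_j''G$. This genuinely requires a Carleson-measure argument, or equivalently $H^1$--BMO duality: the measure $\sum_j|\Delta_j''G(x)|^2\,dx\,\delta_{2^{-j}}(t)$ is Carleson with norm $\lesssim\Sob{G}{BMO}^2$. The classical Coifman--Meyer theorem supplies exactly this, and it \emph{does} include $L^p\times L^\infty\to L^p$ for $1<p<\infty$. That is how Kato and Ponce obtained \eqref{est:KP} with $L^\infty$ norms.

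Your assessment of the two routes is therefore reversed: the symbol/Coifman--Meyer route handles the endpoint cases, and the maximal-function route does not. Conversely, the kernel bound for $[J^\si,S_{j-1}f]\Delta_j g$ that you flag as the main obstacle is routine. Peetre-type control of $\nabla S_{j-1}f$ by $M(\nabla f)$, together with the rapid decay of the kernel of $J^\si\widetilde{\Delta}_j$, gives it directly. If you insert Coifman--Meyer for the resonant and paraproduct pieces, or simply cite \cite{KatoPonce1988, Li2019} as the paper does, the argument closes.
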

We also require the following variant of \eqref{est:KP} which remains valid when $\Lam^\si$ is replaced by $\partial \Lam^{\si-1}$ \cite{Li2019}. For all $\si>0$ and $p\in (1,\infty)$, we have 
\begin{align}\label{est:KP:A}
\Sob{\Lam^\si (fg)-f\Lam^{\si}g}{L^p(\RR^n)}\le C\left(\Sob{\Lam^\si f}{L^p(\RR^n)}\Sob{g}{L^\infty(\RR^n)}+\Sob{\Lam^{\si-1}g}{L^p(\RR^n)}\Sob{\nabla f}{L^\infty(\RR^n)}\right),
\end{align}
where $C$ is a constant depending on $\si,p$.\par
We recall that for $\alpha \in (0,n)$, the operator $\Lam^{-\al}$ can also be defined by the following singular integral:
\begin{align*}
    \Lam^{-\al}f(x)=C_{\al}\int_{\RR^n}\frac{f(y)}{|x-y|^{n-\al}}\,dy,
\end{align*}
where 
\[C_{\al}=\pi^{-n/2}2^{-\al}\frac{\Gamma(n/2-\al/2)}{\Gamma(\al/2)}.\]
Finally, we recall the following elementary estimate from \cite{Constantin1986Euler} (see also \cite{YuZhengJiu2019}).
\begin{lemma}\label{lem:Gronwall:variant}
    Suppose $F(t)$ is a nonnegative continuous function on $[0,T]$ and $G$ is a positive constant. If $0<\nu\le \nu_0=(8TG\int_0^T F(t)\,dt)^{-1}$, then every nonnegative solution $y(t)$ of 
    \begin{align*}
        \begin{cases}
            \frac{dy(t)}{dt}&\le \nu F(t)+Gy(t)^2,\\
            y(0)&=0,
        \end{cases}
    \end{align*} 
    satisfies 
    \[y(t)\le 12\nu \int_0^T F(t)\,dt,\quad \forall t\in [0,T].\]
\end{lemma}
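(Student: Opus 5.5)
The statement to prove is \cref{lem:Gronwall:variant}, an ODE comparison lemma. I would use a continuity (bootstrap) argument. Set $M=\int_0^T F(t)\,dt$ and
\[
T^*=\sup\Big\{\tau\in[0,T]:\ y(t)\le 12\nu M \text{ for all } t\in[0,\tau]\Big\}.
\]
Since $y(0)=0$ and $y$ is continuous, the set is nonempty and $T^*$ is well defined. (If $M=0$ then $F\equiv 0$ and $\nu_0$ is infinite; this degenerate case I would treat separately, or simply assume $M>0$.)

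On $[0,T^*]$ I integrate the differential inequality:
\[
y(t)\le \nu\int_0^t F\,ds+G\int_0^t y^2\,ds\le \nu M+G\,T\,(12\nu M)^2 .
\]
The smallness condition $\nu\le (8TGM)^{-1}$ gives $GT\nu M\le 1/8$. Hence
\[
GT(12\nu M)^2=144\,\nu M\,(GT\nu M)\le 18\,\nu M .
\]
This is too crude: it yields $y\le 19\nu M$, which does not close the bootstrap at $12\nu M$. So the naive bound fails, and the constants must be handled more carefully.

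The refinement is to bootstrap with a smaller threshold than the target. Assume $y\le K\nu M$ on $[0,T^*]$. The same integration gives
\[
y\le \nu M\Big(1+GT K^2\nu M\Big)\le \nu M\Big(1+\tfrac{K^2}{8}\Big).
\]
To close the bootstrap we want $1+K^2/8<K$, that is, $K^2-8K+8<0$. This holds for $K\in(4-2\sqrt2,\,4+2\sqrt2)\approx(1.17,\,6.83)$. Take $K=2$: then $1+\tfrac12=\tfrac32<2$, so on $[0,T^*]$ we get the strict improvement $y\le \tfrac32\nu M<2\nu M$.

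To finish, note that the set where $y\le 2\nu M$ is closed, by continuity of $y$. The strict improvement makes it also relatively open in $[0,T]$, since continuity keeps $y<2\nu M$ slightly beyond $T^*$. Hence $T^*=T$, and $y(t)\le 2\nu M\le 12\nu M$ on all of $[0,T]$, as claimed.

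Strictly, $y$ only needs to be absolutely continuous for the integration step, which follows from $y$ being a solution of the differential inequality. The only genuine obstacle is the arithmetic of the constants: the bootstrap has to be run at a threshold $K$ inside the admissible interval $(4-2\sqrt2,4+2\sqrt2)$ rather than at the stated constant $12$, which lies outside it.
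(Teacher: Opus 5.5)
Your proof is correct. The paper itself contains no proof of this lemma; it is quoted from \cite{Constantin1986Euler} (see also \cite{YuZhengJiu2019}). So the relevant comparison is with the standard argument rather than with anything in the text.

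Your bootstrap at threshold $2\nu M$ closes cleanly. Wherever $0\le y\le 2\nu M$ on $[0,t]$, integrating and using $F\ge 0$ and $GT\nu M\le \tfrac18$ gives $y(t)\le \nu M(1+\tfrac48)=\tfrac32\nu M$. You therefore actually prove $y\le\tfrac32\nu M$, which is much sharper than $12\nu M$. That the bootstrap cannot be run at the constant $12$ itself does no harm, since the weaker conclusion follows from the stronger one.

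The usual alternative is an explicit Riccati comparison. Set $a=\nu M$ and $w(t)=y(t)-\nu\int_0^t F\,ds$; then $0\le y\le w+a$, so $w'\le G(w+a)^2$ with $w(0)=0$. Comparing with $v$ given by $v+a=a/(1-Gat)$ yields $y\le a/(1-GaT)\le\tfrac87 a$. That route gives an essentially optimal constant but needs an ODE comparison principle; yours uses only continuity and a single integration.

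Two loose ends should be tidied.

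\textbf{The case $M=0$.} Your threshold becomes $0$, and the openness step, which uses $\tfrac32\nu M<2\nu M$, breaks down. Instead, run the same bootstrap at an arbitrary threshold $\epsilon>0$ with $GT\epsilon<1$. This gives $y\le GT\epsilon^2<\epsilon$ on $[0,T]$, hence $y\equiv 0$.

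\textbf{The integration step.} If ``solution'' means $y$ is differentiable on $[0,T]$, then $y(t)-\int_0^t(\nu F+Gy^2)\,ds$ has nonpositive derivative and is nonincreasing by the mean value theorem, which is all you need. Your claim that absolute continuity follows from being a solution holds only under this reading. Mere a.e.\ differentiability would not suffice (Cantor-type functions are counterexamples), and absolute continuity would then have to be assumed.
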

\begin{remark}
    For the rest of the manuscript, we adopt the convention that $C$ denotes a positive constant that may vary from line to line. Any dependencies on parameters may be specified in statements of lemmas, theorems, or when they are relevant.
\end{remark}
\section{Commutator estimates}
For the generalized SQG equations \eqref{gSQG}, the evolving scalar $\tht(x,t)$ is expressed in terms of the velocity $u(x,t)$ by a singular integral operator determined by the constitutive law. The singularity of this operator is quantified by $\be$, which, for $\be\in (1,2)$ defines a more singular relationship than that for the SQG equation. Due to this inherent difficulty, in order to obtain suitable control of the nonlinearity, one must exploit observations made in \cite{ChaeConstantinCordobaGancedoWu2012} and \cite{HuKukavicaZiane2015} in which the nonlinear terms were decomposed in terms of commutators, which allowed an effective allocation of derivatives. This process is carried out by invoking estimates on commutators as they appear in the energy arguments. We recall the following estimate from \cite{Li2019} (see also \cite{JollyKumarMartinez2020b}) that is employed frequently throughout the paper.
\begin{lemma}\label{lem:Li2019} For $\be\in (1,2)$, $p \in (1,\infty)$, and $f,g \in \mathcal{S}(\RR^2)$, we have
\begin{align}
    \Sob{[\nabla \Lam^{\be-2}\cdot,g]f}{L^p(\RR^2)}&\le C_1(\be,p)\Sob{f}{L^p(\RR^2)}\Sob{\Lam^{\be-1} g}{BMO(\RR^2)}\notag\\&\le C_1(\be,p)\Sob{f}{L^p(\RR^2)}\Sob{\Lam^{\be-1} g}{L^\infty(\RR^2)},
\end{align}
where $C_1$ is a positive constant.
\end{lemma}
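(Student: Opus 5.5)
The plan is to write the commutator as a singular integral with a Calderón-type kernel and then apply the $T(1)$/Calderón commutator theory with a BMO symbol. The operator $\nabla\Lam^{\be-2}$ has Fourier multiplier $i\xi|\xi|^{\be-2}$, homogeneous of degree $\be-1\in(0,1)$. For $1<\be<2$ it can be written as a principal-value integral with kernel
\[K(z)=c_\be\,\frac{z}{|z|^{2+\be}}\cdot(\text{const}),\qquad z\in\RR^2,\]
which is homogeneous of degree $-(1+\be)$ and odd. The commutator then takes the form
\begin{align*}
[\nabla\Lam^{\be-2}\cdot,g]f(x)=\nabla\Lam^{\be-2}\cdot(gf)(x)-g(x)\nabla\Lam^{\be-2}\cdot f(x)=\mathrm{p.v.}\int_{\RR^2}K(x-y)\,\big(g(y)-g(x)\big)f(y)\,dy .
\end{align*}
The factor $g(y)-g(x)$ cancels part of the extra singularity $|x-y|^{-(\be-1)}$. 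This is the same structure as the Calderón commutator with $A=g$, except that the order is $\be-1$ rather than $1$.

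The next step is to rewrite this in terms of $h=\Lam^{\be-1}g$, so that $g=\Lam^{1-\be}h=C_{\be-1}\int |y-z|^{-(3-\be)}h(z)\,dz$ using the Riesz potential representation recalled above. This gives
\[g(y)-g(x)=C\int\Big(\frac{1}{|y-z|^{3-\be}}-\frac{1}{|x-z|^{3-\be}}\Big)h(z)\,dz.\]
Since the kernel in parentheses integrates to zero in $z$ (formally), $h$ may be replaced by $h-h_Q$ on suitable cubes. The commutator becomes a bilinear operator $B(f,h)$ with a trilinear-type kernel. For fixed $h\in BMO$, I plan to check the standard Calderón–Zygmund kernel bounds for $f\mapsto B(f,h)$: size $|x-y|^{-2}\|h\|_{BMO}$ and Hölder regularity. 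These should follow by splitting the $z$-integration into $|z-x|\lesssim|x-y|$ and its complement, and using the John–Nirenberg logarithmic growth of $|h_{Q_r}-h_{Q_R}|$ together with the decay $|x-z|^{-(4-\be)}$ of the difference kernel. The latter is integrable against $\log$ growth because $4-\be>2$.

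For $L^2$ boundedness I will apply the $T(1)$ theorem. Antisymmetry of $K$ gives weak boundedness. $T(1)$ and $T^*(1)$ reduce to $\nabla\Lam^{\be-2}\cdot g$ modulo constants, which equals $\mathcal{R}\cdot$ (a Riesz transform) of $h$, and Riesz transforms map $BMO$ to $BMO$. This yields boundedness on $L^2$, and then on $L^p$ for $1<p<\infty$ by Calderón–Zygmund theory, with constant $C_1(\be,p)\|h\|_{BMO}$. The second inequality follows trivially from $\|h\|_{BMO}\le 2\|h\|_{L^\infty}$.

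A quicker alternative is a paraproduct decomposition of $gf$: write $gf=T_gf+T_fg+R(f,g)$. The term $[\nabla\Lam^{\be-2},T_g]f$ is controlled by the symbol calculus since $\nabla g$ has the regularity of $\Lam^{2-\be}h$. The terms $T_fg$ and $R$ carry $\be-1$ derivatives onto $g$, so they are bounded by paraproducts with $h\in BMO$, which are $L^p$ bounded. I expect the main obstacle to be the low–high piece $\nabla\Lam^{\be-2}(T_gf)-gT(\ldots)$. There one needs the commutator of a degree-$(\be-1)$ multiplier with a low-frequency $g$ to cost exactly $\Lam^{\be-1}g$ in BMO rather than $\nabla g$ in $L^\infty$. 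I would handle it via the kernel estimate on each Littlewood–Paley block, using that $\|\Delta_j g\|_{L^\infty}\lesssim 2^{-j(\be-1)}\|h\|_{BMO}$ and summing over $j$ with the factor $2^{(j-k)(2-\be)}$, which is a convergent geometric series since $\be<2$.
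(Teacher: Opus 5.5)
The paper does not prove this lemma; it imports it from \cite{Li2019} (see also \cite{JollyKumarMartinez2020b}). So the only in-paper argument to compare with is the elementary proof of \cref{commutator:estimate:1}.

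Your $T(1)$ route is sound and is a genuine alternative. Its engine is the H\"older bound $|g(x)-g(y)|\le C|x-y|^{\be-1}\Sob{\Lam^{\be-1}g}{BMO}$. Your near/far splitting of the Riesz potential with John--Nirenberg does deliver this; the far part is controlled by $\sum_k k\,2^{-k(2-\be)}$, which is finite since $\be<2$. This bound makes $K(x-y)(g(y)-g(x))$ a standard kernel of size $|x-y|^{-2}$ with smoothness exponent $\be-1$. Moreover $T1=\nabla\Lam^{-1}\cdot\Lam^{\be-1}g$ is a Riesz transform of a BMO function.

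One step needs rewording. The commutator kernel is \emph{symmetric}: it is the odd $K$ times the antisymmetric factor $g(y)-g(x)$, and indeed $T^*=T$. So the fact that antisymmetric kernels automatically satisfy the weak boundedness property does not apply. Use instead that $\nabla\Lam^{\be-2}$ is anti-self-adjoint. For bumps $\phi_1,\phi_2$ adapted to $B(x_0,R)$ this gives
\[
\lb T\phi_1,\phi_2\rb=-\int_{\RR^2}\big(g-g(x_0)\big)\cdot\big(\phi_1\nabla\Lam^{\be-2}\phi_2+\phi_2\nabla\Lam^{\be-2}\phi_1\big)\,dx .
\]
On $B(x_0,R)$ you have $|g-g(x_0)|\lesssim R^{\be-1}\Sob{\Lam^{\be-1}g}{BMO}$, and $\Sob{\nabla\Lam^{\be-2}\phi_i}{L^\infty}\lesssim R^{1-\be}$. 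Together these give $|\lb T\phi_1,\phi_2\rb|\lesssim R^2\Sob{\Lam^{\be-1}g}{BMO}$, which is the weak boundedness property.

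Compared with a Littlewood--Paley/paraproduct proof like your second sketch, the $T(1)$ argument uses heavier machinery but explains the constants structurally. The BMO norm enters only through $T1$ and through the H\"older modulus of $g$. The constant it produces degenerates as $\be\to1$ because the smoothness exponent $\be-1$ does. In the paraproduct route the same degeneration appears as series like $\sum_{k<j}2^{-(j-k)(\be-1)}$. These arise in the high--low term $T_{\nabla\Lam^{\be-2}f}\,g$ coming from $g\cdot\nabla\Lam^{\be-2}f$, and also in $\nabla\Lam^{\be-2}R(f,g)$; your sketch does not list these terms.

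That degeneration is exactly why the paper proves \cref{commutator:estimate:1} separately. There cancellation is abandoned in favour of a crude split: the mean value theorem for $|x-y|\le 2$ and Young's inequality for $|x-y|>1$. The price is $\nabla g$ together with $\Sob{f}{L^1}$ or $\Sob{f}{\dot{W}^{-1,p}}$ on the right-hand side, in exchange for constants uniform as $\be\to1$.
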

In \cref{lem:Li2019}, the constant $C_1$ becomes unbounded as $\be\to 1$. As a result, this bound fails to provide the required control of the nonlinear terms in the proof of \cref{main:T:2}. To overcome this difficulty, we establish a new technical result which remains valid when $\be \to 1$ and provides the required control of the nonlinear terms as they appear in the energy arguments. 
\begin{lemma}\label{commutator:estimate:1}
    Let $\be \in (1,2)$ and $p\in (1,\infty)$. Let $q_1, r_1, q_2,r_2 
    \in [1,\infty]$  
    be such that \[\frac{1}{q_1}+\frac{1}{r_1}=\frac{1}{p}, \] and
    \[\frac{1}{q_2}+\frac{1}{r_2}>\frac{1}{p}\]
    Then, we have
    \begin{align}
        \Sob{[\nabla \Lam^{\be-2}\cdot,g]f}{L^p}&\le C_2(\be)\Sob{\nabla g}{L^{q_1}}\Sob{f}{L^{r_1}}+C_3(\be,p)\Sob{g}{L^\infty}\Sob{f}{L^{1}}. \label{commutator:1:est:1}\\
        \Sob{[\nabla \Lam^{\be-2}\cdot,g]f}{L^p}&\le C_2(\be)\Sob{\nabla g}{L^{q_1}}\Sob{f}{L^{r_1}}+C_4(\be)\Sob{g}{L^\infty}\Sob{f}{\dot{W}^{-1,p}}\notag\\&+C_5(\be,p,q_2,r_2)\Sob{\nabla g}{L^{q_2}}\Sob{f}{\dot{W}^{-1,r_2}}\label{commutator:1:est:2},
    \end{align}
    where $C_2$, $C_3$, $C_4$, and $C_5$ are positive constants that remain uniformly bounded as $\be \to 1$.
\end{lemma}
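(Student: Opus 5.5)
We plan to work directly with the kernel of $\nabla\Lam^{\be-2}$ and to split the commutator into a near-field part and a far-field part. All constants will be tracked explicitly, so that uniformity as $\be\to1$ can be checked at the end.

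\textbf{Kernel and splitting.} Since $\Lam^{\be-2}=\Lam^{-(2-\be)}$ with $2-\be\in(0,1)$, the integral representation recalled in Section 2 gives $\Lam^{\be-2}h=C_{2-\be}\int|x-y|^{-\be}h(y)\,dy$, where $C_{2-\be}$ stays bounded as $\be\to1$. Hence $\nabla\Lam^{\be-2}$ has kernel
\[K(z)=-\be C_{2-\be}\,z|z|^{-\be-2},\qquad |K(z)|\le C|z|^{-\be-1},\quad |\nabla K(z)|\le C|z|^{-\be-2},\]
with $C$ uniform for $\be\in(1,3/2]$. The commutator has kernel $K(x-y)(g(y)-g(x))$. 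The factor $g(y)-g(x)=O(|x-y|)$ removes the non-integrable singularity, so
\[[\nabla\Lam^{\be-2}\cdot,g]f(x)=\int K(x-y)\,(g(y)-g(x))\,f(y)\,dy\]
holds as an absolutely convergent integral for Schwartz $f,g$; this identity is checked first. Fix a smooth cutoff $\chi$ with $\chi=1$ on $\{|z|\ge2\}$ and $\chi=0$ on $\{|z|\le1\}$. Write the commutator as $I_{near}+I_{far}$, with kernels $(1-\chi)K$ and $\chi K$ respectively.

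\textbf{Near part (the $C_2$ term, common to both estimates).} Write $g(x+z)-g(x)=z\cdot\int_0^1\nabla g(x+sz)\,ds$. Then
\[|I_{near}(x)|\le C\int_0^1\!\!\int_{|z|\le2}|z|^{-\be}\,|\nabla g(x+sz)|\,|f(x+z)|\,dz\,ds.\]
Take the $L^p$ norm, use Minkowski's inequality, and apply H\"older with $1/q_1+1/r_1=1/p$ together with translation invariance. This gives
\[\Sob{I_{near}}{L^p}\le C\Big(\int_{|z|\le2}|z|^{-\be}dz\Big)\Sob{\nabla g}{L^{q_1}}\Sob{f}{L^{r_1}}.\]
The integral equals $2\pi\,2^{2-\be}/(2-\be)$, which is bounded as $\be\to1$ (it blows up only as $\be\to2$). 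This yields $C_2(\be)$.

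\textbf{Far part for \eqref{commutator:1:est:1}.} Bound $|g(y)-g(x)|\le2\Sob{g}{L^\infty}$. Then $|I_{far}|\le C\Sob{g}{L^\infty}\,(|z|^{-\be-1}\mathbf 1_{|z|\ge1})*|f|$, and Young's inequality ($L^1*L^p$) gives
\[\Sob{I_{far}}{L^p}\le C\Sob{g}{L^\infty}\Sob{f}{L^1}\Big(\int_{|z|\ge1}|z|^{-p(\be+1)}dz\Big)^{1/p}.\]
The last factor equals $\big(2\pi/(p(\be+1)-2)\big)^{1/p}$. Since $p(\be+1)-2\ge 2p-2>0$, it stays bounded as $\be\to1$, giving $C_3(\be,p)$.

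\textbf{Far part for \eqref{commutator:1:est:2}.} Write $f=\nabla\cdot F$ with $F=\nabla\Delta^{-1}f$. By boundedness of the Riesz transforms, $\Sob{F}{L^r}\le C\Sob{f}{\dot W^{-1,r}}$ for $r\in(1,\infty)$. Integrating by parts in $y$ in $I_{far}$ produces two terms.
\begin{itemize}
\item The first is $\int\nabla_y[\chi K(x-y)]\,(g(y)-g(x))\cdot F(y)\,dy$. Its kernel is bounded by $C|z|^{-\be-2}\mathbf 1_{|z|\ge1}$, which lies in $L^1$ with norm $2\pi/\be$. Young's inequality then gives $C_4\Sob{g}{L^\infty}\Sob{F}{L^p}$.
\item The second is $\int\chi K(x-y)\,\nabla g(y)\cdot F(y)\,dy$. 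Set $1/m=1/q_2+1/r_2>1/p$, so that $\nabla g\,F\in L^m$ by H\"older. Apply Young with exponent $k$ defined by $1+1/p=1/k+1/m$; the condition $1/m>1/p$ gives $k>1$. The kernel $|z|^{-\be-1}\mathbf 1_{|z|\ge1}$ lies in $L^k$ with norm $(2\pi/(k(\be+1)-2))^{1/k}$, bounded as $\be\to1$ because $2k-2>0$. This is exactly where the strict inequality $1/q_2+1/r_2>1/p$ enters, and it gives $C_5$.
\end{itemize}

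\textbf{Expected main obstacle.} The delicate point is not the size of the constants, which the computations above show are uniform, but the justification steps. First, we must confirm that the commutator of the singular (order $\be-1>0$) operator equals the absolutely convergent kernel integral; for this we plan to use the Fourier definition and approximate $K$ by truncated kernels. Second, we must justify the integration by parts and the representation $f=\nabla\cdot F$ at endpoint exponents such as $r_2=\infty$ or $p$-adjacent cases. If an endpoint causes trouble, we would first restrict to $r_2\in(1,\infty)$ and then extend by density.
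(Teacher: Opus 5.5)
Your proposal is correct and follows the paper's proof essentially step for step. Both use the same near/far cutoff split. Both bound the near part by the mean value theorem plus Minkowski and H\"older. For \eqref{commutator:1:est:1} both apply Young's inequality with the far kernel in $L^p$ and $f\in L^1$. For \eqref{commutator:1:est:2} both integrate by parts against $\nabla\Lam^{-2}f$; the paper writes $f=-\Delta\Lam^{-2}f$ and keeps the kernel-derivative and cutoff-derivative terms $II_a$, $II_b$ separate, whereas you merge them into $\nabla_y[\chi K]$. The strict inequality $\frac{1}{q_2}+\frac{1}{r_2}>\frac1p$ is used exactly as you say, to get a Young exponent $\til{q}_2>1$.

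As in the paper, your Young step for the last term tacitly needs $\frac{1}{q_2}+\frac{1}{r_2}\le 1$, and your Riesz-transform bound needs $r_2\in(1,\infty)$. Both hold in the paper's application $p=q_2=r_2=2$.
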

 \begin{proof}
    Let $\rho(s)\in C^{\infty}_c(\RR^2)$ be a smooth cut-off function satisfying
    \begin{align*}
        \rho(s)=\begin{cases}
            1,\quad |s|\le 1,\\
            0,\quad |s|\ge 2,
        \end{cases}
    \end{align*}
and such that $|\rho'(s)|\le 2$. By definition
    \begin{align*}
        ([\nabla \Lam^{\be-2}\cdot,g]f)(x)&=\text{P.V.\,} C_\be\int_{\RR^2}\frac{(x-y)}{|x-y|^{2+\be}}(g(y)f(y)-g(x)f(y))\,dy.
        \end{align*}
        For convenience, we will take $C_\be=1$. We have
        \begin{align*}
        ([\nabla \Lam^{\be-2}\cdot,g]f)(x)&=\int_{\RR^2}\frac{(x-y)}{|x-y|^{2+\be}}\rho(|x-y|)(g(y)-g(x))f(y)\,dy\\&+\int_{\RR^2}\frac{(x-y)}{|x-y|^{2+\be}}(1-\rho(|x-y|))(g(y)-g(x))f(y)\,dy\\&=I+II.
    \end{align*}
    Using the mean value theorem and making a change of variables, we have
    \begin{align*}
        |I|&\le \int_{|x-y|\le 2}\frac{1}{|x-y|^{\be}}\left(\int_0^1 |\nabla g(x-\tau (x-y))|\,d\tau \right)|f(y)|\,dy\\
        &=\int_0^1 \int_{|z|\le 2}\frac{1}{|z|^{\be}}|\nabla g(x-\tau z)||f(x-z)|\,dy\,d\tau.
    \end{align*}
    Applying Minkowski inequality and H\"older's inequality, we obtain
    \begin{align}\label{est:commutator:I}
        \Sob{I}{L^p}&\le \left(\int_{|z|\le 2}\frac{1}{|z|^{\be}}\,dz\right)\Sob{\nabla g}{L^{q_1}}\Sob{f}{L^{r_1}}\notag\\
        &=\frac{C2^{{2-\be }}}{(2-\be)}\Sob{\nabla g}{L^{q_1}}\Sob{f}{L^{r_1}}.
    \end{align}
    We estimate $|II|$ by
    \begin{align*}
        |II|&\le \int_{|x-y|>1}\frac{1}{|x-y|^{1+\be}}|g(y)-g(x)||f(y)|\,dy\\
        &\le 2\Sob{g}{L^\infty}\int_{|x-y|>1}\frac{1}{|x-y|^{1+\be}}|f(y)|\,dy
    \end{align*}
    Applying Young's convolution inequality, we obtain
    \begin{align}\label{est:commutator:II}
        \Sob{II}{L^p}&\le 2\left(\int_{|z|> 1}\frac{1}{|z|^{p(1+\be)}}\,dz\right)^{\frac{1}{p}}\Sob{g}{L^\infty}\Sob{f}{L^{1}}\notag\\&=\frac{C}{((1+\be)p-2)^{\frac{1}{p}}}\Sob{g}{L^\infty}\Sob{f}{L^{1}}.
    \end{align}
    Now we estimate $|II|$ in a different way. Integrating by parts, we obtain
    \begin{align*}
       II&= \int_{\RR^2}\frac{(x-y)}{|x-y|^{2+\be}}(1-\rho(|x-y|))(g(y)-g(x))(-\Delta \Lam^{-2} f)(y)\,dy\\
        &=\int_{\RR^2}\bdy_l\left(\frac{(x-y)}{|x-y|^{2+\be}}\right)(1-\rho(|x-y|))(g(y)-g(x))(\bdy_l \Lam^{-2} f)(y)\,dy\\
        & \quad-\int_{\RR^2}\frac{(x-y)}{|x-y|^{2+\be}}(\bdy_l \rho(|x-y|))(g(y)-g(x))(\bdy_l \Lam^{-2} f)(y)\,dy\\
        &\quad+\int_{\RR^2}\frac{(x-y)}{|x-y|^{2+\be}}(1-\rho(|x-y|))(\bdy_l g(y))( \bdy_l \Lam^{-2} f)(y)\,dy\\
        &=II_a+II_b+II_c.
    \end{align*}
    We estimate $|II_a|$ by
    \begin{align*}
        |II_a|\le 2\Sob{g}{L^\infty}\int_{|x-y|>1}\frac{1}{|x-y|^{2+\be}}|(\Lam^{-1}\mathcal{R}_l f)(y)|\,dy.
    \end{align*}
    Applying Young's convolution inequality, we obtain
    \begin{align}\label{est:commutator:IIa}
        \Sob{II_a}{L^p}&\le 2\left(\int_{|z|> 1}\frac{1}{|z|^{2+\be}}\,dz\right)\Sob{g}{L^\infty}\Sob{\Lam^{-1}\mathcal{R}_l f}{L^{p}}\notag\\&\le\frac{C}{\be}\Sob{g}{L^\infty}\Sob{f}{\dot{W}^{-1,p}}.
    \end{align}
 Next, we estimate $|II_b|$. Since $|r\rho'(r)|\le 4$, we have
 \begin{align*}
     |II_b|\le 4\Sob{g}{L^\infty}\int_{2\ge |x-y|>1}\frac{1}{|x-y|^{2+\be}}|(\Lam^{-1}\mathcal{R}_l f)(y)|\,dy.
 \end{align*}
 Applying Young's convolution inequality and proceeding just like for $II_a$, we obtain
    \begin{align}\label{est:commutator:IIb}
        \Sob{II_b}{L^p}&\le\frac{C}{\be}\Sob{g}{L^\infty}\Sob{f}{\dot{W}^{-1,p}}.
    \end{align}
    Finally, we have
    \begin{align*}
        |II_c|\le \int_{ |x-y|>1}\frac{1}{|x-y|^{1+\be}}|\nabla g(y)||(\Lam^{-1}\mathcal{R}_l f)(y)|\,dy.
    \end{align*}
    Applying Young's convolution inequality and H\"older's inequality, we obtain
    \begin{align}\label{est:commutator:IIc}
        \Sob{II_c}{L^p}&\le \left(\int_{|z|> 1}\frac{1}{|z|^{\til{q_2}(1+\be)}}\,dz\right)^{\frac{1}{\til{q}_2}}\Sob{\nabla g}{L^{q_2}}\Sob{\Lam^{-1}\mathcal{R}_l f}{L^{r_2}}\notag\\&\le\frac{C}{((1+\be)\til{q}_2-2)^{\frac{1}{\til{q}_2}}}\Sob{\nabla g}{L^{q_2}}\Sob{f}{\dot{W}^{-1,r_2}},
    \end{align}
    where $\til{q}_2, q_2$, and $r_2$ satisfy
    \[1+\frac{1}{p}=\frac{1}{\til{q_2}}+\frac{1}{q_2}+\frac{1}{r_2},\quad \til{q}_2>1.\]
Collecting the estimates in \eqref{est:commutator:I}-\eqref{est:commutator:IIc}, we obtain the desired bound.
\end{proof}
\section{Conservation law with modified flux}
To prove our main results, we will recast \eqref{gSQG} as a particular case of a general equation that is of the form of a conservation law with modified flux. This resetting of \eqref{gSQG} illuminates the hidden cancellation structure which plays a crucial role in the control of nonlinear terms. This idea was first employed in \cite{JollyKumarMartinez2020a} in the form of a modified approximation scheme by the present author along with M.S. Jolly and V.R. Martinez to obtain local well-posedness for large data and global well-posedness for small data for dissipative gSQG equations in critical Sobolev spaces. Subsequently, it was also used effectively in \cite{JollyKumarMartinez2020b} to establish local well-posedness for logarithmically regularized versions of \eqref{gSQG}. As we will observe, this formulation also considerably simplifies statements of several key commutator estimates, thus providing a clear view of the main technical difficulties in the proofs. To be specific, given $q$ sufficiently smooth, we consider the following initial value problem:
\begin{align}\label{conservation:law}
\begin{cases}
     &\bdy_t \tht+\Div F_q^{\be} (\tht)=0,\\
     &\tht(x,0)=\tht_0(x).
\end{cases} 
\end{align}
where
\begin{align*}
    F_q^{\be}(\tht)=(\nabla^{\perp}\Lam^{\be-2}q)\tht+\Lam^{\be-2}((\nabla^\perp \tht)q).
\end{align*}
Observe that one formally has 
\begin{align*}
    \Div F_{-\tht}^{\be}(\tht)=-(\nabla^\perp \Lam^{\be-2}\tht)\cdot \nabla \tht=u\cdot \nabla \tht.
\end{align*}
Thus, one recovers equation \eqref{gSQG} in the case $q=-\tht$. Now, we will establish estimates for the nonlinear term in \eqref{conservation:law}. These are stated in \cref{lem:law:1} and \cref{lem:law:2}. The results obtained will then be employed to obtain the desired control on the nonlinear terms in the energy arguments required in the proofs of \cref{main:T:1} and \cref{main:T:2}. We remark that the commutator structure of the nonlinearity in \eqref{conservation:law} plays a central role in the proof of these estimates. The first estimate, stated in \cref{lem:law:1}, provides a sharper bound due to trilinear interactions of the nonlinearity as they appear in the energy arguments, while the second estimate \cref{lem:law:2} is a continuity estimate of the bilinear term. Furthermore, the bounds are valid in the limit $\be \to 1$.
\begin{lemma}\label{lem:law:1} Let $\be\in (1,2)$ and $\sigma>3$. Then we have 
\begin{align*}
    |\lb \Div F_q^\be (\tht), (-\Delta)^\sigma \tht\rb|\le C_6(\be,\si) \Sob{q}{H^\si}\Sob{\tht}{H^\si}^2,
\end{align*}
where $C_6$ is a positive constant. Furthermore, $C_6$ remains uniformly bounded as $\be \to 1$.
\end{lemma}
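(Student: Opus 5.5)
We would prove the estimate by an energy-type decomposition in which the top-order derivatives falling on $q$ cancel exactly, and the top-order derivatives falling on $\tht$ are handled by symmetrization together with \cref{commutator:estimate:1}. That lemma is the source of uniformity as $\be\to1$.

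\textbf{Step 1 (structure of the flux).} Since $\Div(\nabla^\perp\tht\, q)=\nabla^\perp\tht\cdot\nabla q=-\nabla^\perp q\cdot\nabla\tht$, we have
\begin{align*}
\Div F_q^\be(\tht)=u_q\cdot\nabla\tht-\Lam^{\be-2}\big(\nabla^\perp q\cdot\nabla\tht\big),\qquad u_q:=\nabla^\perp\Lam^{\be-2}q,
\end{align*}
with $\Div u_q=0$. We write $\lb \Div F_q^\be(\tht),(-\Delta)^\si\tht\rb=\lb\Lam^\si\Div F_q^\be(\tht),\Lam^\si\tht\rb$ and expand both pieces at top order.

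\textbf{Step 2 (top-order terms on $q$).} In the first piece, the Kato--Ponce expansion of $\Lam^\si(u_q\cdot\nabla\tht)$ produces $u_q\cdot\nabla\Lam^\si\tht$, which vanishes after pairing with $\Lam^\si\tht$ by incompressibility. It also produces $\Lam^\si u_q\cdot\nabla\tht=\nabla^\perp\Lam^{\si+\be-2}q\cdot\nabla\tht$, which carries $\si+\be-1>\si$ derivatives on $q$ and is not controlled by $\Sob{q}{H^\si}$.

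The second piece, $\Lam^{\si+\be-2}(\nabla^\perp q\cdot\nabla\tht)$, contains the same term $\nabla^\perp\Lam^{\si+\be-2}q\cdot\nabla\tht$ with opposite sign, so the two cancel exactly. We therefore organize the whole expression as a second-order commutator:
\begin{align*}
&\Big[\Lam^\si(u_q\cdot\nabla\tht)-u_q\cdot\nabla\Lam^\si\tht-\Lam^\si u_q\cdot\nabla\tht\Big]\\
&\quad-\Big[\Lam^{\si+\be-2}(\nabla^\perp q\cdot\nabla\tht)-\nabla^\perp\Lam^{\si+\be-2}q\cdot\nabla\tht-\nabla^\perp q\cdot\Lam^{\si+\be-2}\nabla\tht\Big]\\
&\quad-\nabla^\perp q\cdot\Lam^{\si+\be-2}\nabla\tht .
\end{align*}
The two brackets are double commutators. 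In them the derivatives split so that at most $\si$ derivatives land on either factor in $L^2$, while the other factor takes at most about two derivatives in $L^\infty$. These we bound by \cref{lem:KP}, the variant \eqref{est:KP:A}, and Sobolev embedding (\cref{lem:Sob:embedding}), using $\si>3$ so that $\nabla^2 q$, $\nabla^2\tht$, $\Lam^{\be}\nabla q\in L^\infty$ with norms $\lesssim\Sob{q}{H^\si}$ and $\Sob{\tht}{H^\si}$. To make the cancellation rigorous and keep the constants uniform in $\be\in(1,2)$, I would do this step with a Littlewood--Paley paraproduct decomposition in the high--low, low--high and high--high interactions, since the multipliers $|\xi|^{\be-2}\xi$ vary smoothly in $\be$.

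\textbf{Step 3 (top-order terms on $\tht$; the main obstacle).} The last term still carries $\si+\be-1$ derivatives on $\tht$. After pairing and moving $\Lam^{\be-2}$ across, it becomes
\begin{align*}
-\lb \nabla^\perp q\cdot\nabla\Lam^{\be-2}\Lam^\si\tht,\ \Lam^\si\tht\rb,
\end{align*}
up to lower-order commutators $[\Lam^\si,\nabla^\perp q]$ handled as in Step 2. Taking the adjoint of $\nabla\Lam^{\be-2}$ and using $\Div\nabla^\perp q=0$, this pairing is antisymmetric modulo a commutator. Hence it equals $\tfrac12\lb[\nabla\Lam^{\be-2}\cdot,\nabla^\perp q]f,f\rb$ with $f=\Lam^\si\tht$, where the vector field is $g=\nabla^\perp q$ component-wise.

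We then apply \eqref{commutator:1:est:2} with $p=2$, $q_1=\infty$, $r_1=2$. This gives the bound
\begin{align*}
\Sob{\nabla^2q}{L^\infty}\Sob{\Lam^\si\tht}{L^2}+\Sob{\nabla q}{L^\infty}\Sob{\Lam^{\si-1}\tht}{L^2}+\Sob{\nabla^2 q}{L^{q_2}}\Sob{\Lam^{\si-1}\tht}{L^{r_2}},
\end{align*}
with, for instance, $q_2=2$ and $r_2=4$; the last factor is controlled by $\Sob{\tht}{H^\si}$ via Sobolev embedding. Every factor is bounded by $\Sob{q}{H^\si}\Sob{\tht}{H^\si}$, and $C_2,C_4,C_5$ stay bounded as $\be\to1$. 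We deliberately do not use \cref{lem:Li2019} here, because its constant blows up as $\be\to1$.

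I expect the delicate part to be this step together with the bookkeeping in Step 2. Every constant must come from Kato--Ponce with fixed smoothness indices or from \cref{commutator:estimate:1}, never from an estimate whose constant degenerates as $\be\to1$.
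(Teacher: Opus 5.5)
Your argument is correct, but it places the key step differently from the paper.

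\textbf{How the paper proceeds.} The paper writes $\Lam^{\si+\be-2}=\Lam^{\be-2}\Lam^\si$ and expands only $\Lam^\si$. As a result, the top-order terms on $q$ do not cancel. They combine into the commutator $I_1=-\lb[\Lam^{\be-2}\bdy_\ell^\perp,\bdy_\ell\tht]\Lam^\si q,\Lam^\si\tht\rb$, and this is where the paper uses \cref{commutator:estimate:1}. The top-order term on $\tht$ is removed by splitting $\Lam^{\be-2}=\Lam^{\frac{\be}{2}-1}\Lam^{\frac{\be}{2}-1}$ symmetrically, so that $I_4=0$. What remains ($I_3$, $I_5$) consists of positive-order commutators, which \eqref{est:KP:A} handles after \eqref{Laplacian:split}.

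\textbf{How you proceed.} You keep $\Lam^{\si+\be-2}$ whole, so $\nabla^\perp\Lam^{\si+\be-2}q\cdot\nabla\tht$ cancels identically. The remainder is a positive-order double commutator, for which the Kato--Ponce constants are uniform in $\be$ because there is no far-field difficulty. You then use \cref{commutator:estimate:1} on the $\tht$ side, through the symmetrized form $\frac12\lb[\nabla\Lam^{\be-2}\cdot,\nabla^\perp q]\Lam^\si\tht,\Lam^\si\tht\rb$.

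\textbf{What each route buys.} Your route makes the purpose of the modified flux transparent. It also gives a slightly better dependence on $q$: only $\Sob{q}{\Hdot^{\si+\be-2}}$ and low norms such as $\Sob{\nabla^2 q}{L^\infty}$ and $\Sob{\nabla u_q}{L^\infty}$ appear. The paper's $I_1$ and $I_5^a$ use the full $\Sob{q}{\Hdot^\si}$. The paper's route, for its part, needs only the single-commutator bound \eqref{est:KP:A}. You do not need Littlewood--Paley either, because the identity
\begin{align*}
\Lam^{s}(fg)-f\Lam^{s}g-g\Lam^{s}f=-[\Lam^{s-2}\bdy_l,f]\bdy_l g-[\Lam^{s-2}\bdy_l,g]\bdy_l f
\end{align*}
reduces both of your brackets ($s=\si$ and $s=\si+\be-2>2$) to \eqref{est:KP:A}. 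This is exactly what the paper does for $I_3$.

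\textbf{Small corrections.}
\begin{itemize}
\item In Step 3, $\Lam^{\si+\be-2}\nabla\tht=\nabla\Lam^{\be-2}\Lam^\si\tht$ holds identically. So no lower-order commutators $[\Lam^\si,\nabla^\perp q]$ arise.
\item The symmetrization uses only that $\bdy_\ell\Lam^{\be-2}$ is anti-self-adjoint. It does not need $\Div\nabla^\perp q=0$.
\item $\Lam^{\be}\nabla q\in L^\infty$ is not controlled by $\Sob{q}{H^\si}$ when $\si\le\be+2$, which can happen for $\si\in(3,4)$ and $\be$ near $2$. It is never needed, though. The quantity that actually appears is $\nabla u_q=\nabla\nabla^\perp\Lam^{\be-2}q$, of order $\be$, which is bounded by $\Sob{q}{H^{\be+1+\eps}}\le C\Sob{q}{H^\si}$.
\end{itemize}
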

\begin{proof}
    We have
    \begin{align*}
        \lb \Div F_q^\be (\tht), (-\Delta)^\sigma \tht\rb&= \underbrace{\lb \Lam^\si (\nabla^{\perp}\Lam^{\be-2}q\cdot \nabla \tht),\Lam^\si \tht \rb}_{I^a}-\underbrace{\lb \Lam^{\si+\be-2}(\nabla^{\perp}q\cdot \nabla \tht), \Lam^\si \tht\rb}_{I^b}\\
        &=I_1+I_2+I_3+I_4+I_5,
    \end{align*}
where
\begin{align*}
    I_1&=\underbrace{\lb \nabla^{\perp}\Lam^{\be-2}\Lam^\si q\cdot \nabla \tht ,\Lam^\si \tht\rb}_{I_1^a}-\underbrace{\lb\nabla^{\perp}\Lam^{\be-2}\cdot(\Lam^\si q \nabla \tht) ,\Lam^\si \tht\rb}_{I^b_1},\\
    I_2&=\lb \nabla^{\perp}\Lam^{\be-2}q\cdot \nabla \Lam^{\si}\tht,\Lam^\si \tht \rb,\\
    I_3&=I^a-I_1^a-I_2\\
    &=\lb \Lam^\si (\nabla^{\perp}\Lam^{\be-2}q\cdot \nabla \tht),\Lam^\si \tht \rb-\lb \nabla^{\perp}\Lam^{\be-2}\Lam^\si q\cdot \nabla \tht ,\Lam^\si \tht\rb-\lb \nabla^{\perp}\Lam^{\be-2}q\cdot \nabla \Lam^{\si}\tht,\Lam^\si \tht \rb,\\
    I_4&=\lb \Lam^{\frac{\be}{2}-1}(\nabla^\perp q\cdot \nabla \Lam^{\frac{\be}{2}-1}\Lam^\si \tht ),\Lam^\si \tht\rb,\\
    I_5&=-I^b+I^b_1+I_4\\
    &=- \left (\lb \Lam^{\si+\be-2}(\nabla^{\perp}q\cdot \nabla \tht), \Lam^\si \tht\rb- \lb\nabla^{\perp}\Lam^{\be-2}\cdot(\Lam^\si q \nabla \tht) ,\Lam^\si \tht\rb- \lb \Lam^{\frac{\be}{2}-1}(\nabla^\perp q\cdot \nabla \Lam^{\frac{\be}{2}-1}\Lam^\si \tht ),\Lam^\si \tht\rb\right)
\end{align*}
Observe that by integrating by parts, we obtain $I_2,I_4=0$. 
\subsubsection{$\text{Estimating }I_1$} Let us denote by $\bdy^\perp=(-\bdy_{x_2},\bdy_{x_1})$. Then, we can rewrite $I_1$ as
\begin{align*}
    I_1=-\lb [\Lam^{\be-2}\bdy_\ell^\perp,\bdy_\ell \tht]\Lam^{\si}q,\Lam^\si \tht \rb.
\end{align*}
Applying \cref{commutator:estimate:1}, \eqref{commutator:1:est:2} with $f=\Lam^\si q$, $g=\bdy_\ell \tht$,  and $p=2, q_1=\infty, r_1=2, q_2=r_2=2$ , we obtain
\begin{align}
    |I_1|\le & C_2\Sob{ \nabla^2 \tht}{L^\infty}\Sob{q}{\Hdot^\si}\Sob{ \tht}{\Hdot^\si}+C_3 \Sob{\nabla \tht}{L^\infty}\Sob{q}{\dot{H}^{\si-1}}\Sob{ \tht}{\Hdot^{\si}}\notag\\
    & + C_4\Sob{ \nabla^2 \tht}{L^{2}}\Sob{q}{\Hdot^{\si-1}}\Sob{ \tht}{\Hdot^{\si}}\label{est:I1:first}\\
    \le & C\Sob{q}{H^\si}\Sob{\tht}{H^\si}^2 \label{est:I1:second}.
\end{align}
\subsubsection{$\text{Estimating }I_3$} Observe that for any $\til{\si}>2$, we have
\begin{align}\label{Laplacian:split}
    \Lam^{\til{\si}}f=\Lam^{\til{\si}-2}(-\Delta)f=-(\Lam^{\til{\si}-2}\bdy_l)\bdy_l f.
\end{align}
Then, by applying \eqref{Laplacian:split} and the product rule, we obtain
\begin{align*}
    I_3&=-\lb \Lam^{\si-2}\bdy_l(\bdy_l\bdy_\ell^\perp \Lam^{\be-2}q\bdy_\ell \tht),\Lam^\si \tht\rb+\lb (\Lam^{\si-2}\bdy_l(\bdy_l\bdy_\ell^\perp\Lam^{\be-2}q))\bdy_\ell \tht,\Lam^\si \tht \rb\\
    &-\lb \Lam^{\si-2}\bdy_l(\bdy_\ell^\perp \Lam^{\be-2}q\bdy_l \bdy_\ell \tht),\Lam^\si \tht\rb+\lb \bdy_\ell^{\perp}\Lam^{\be-2}q(\Lam^{\si-2}\bdy_l(\bdy_l\bdy_\ell \tht)), \Lam^\si \tht\rb\\
    &=-\lb [\Lam^{\si-2}\bdy_l, \bdy_\ell \tht]\bdy_l\bdy_\ell^\perp\Lam^{\be-2} q,\Lam^\si \tht\rb-\lb [\Lam^{\si-2}\bdy_l, \bdy_\ell^\perp \Lam^{\be-2}q]\bdy_l \bdy_\ell \tht,\Lam^\si \tht\rb\\
    &=I_3^a+I_3^b.
\end{align*}
Applying the Cauchy-Schwarz inequality followed by \eqref{est:KP:A} with $f=\bdy_\ell \tht$, $g=\bdy_l \bdy_\ell^\perp \Lam^{\be-2}q$, and $p=2$, we obtain
\begin{align}
    |I_3^a|&\le C(\Sob{\bdy_\ell \tht}{W^{\si-1,2}}\Sob{\bdy_l\bdy_\ell^\perp\Lam^{\be-2} q}{L^\infty}+\Sob{\bdy_\ell \tht}{W^{1,\infty}}\Sob{\bdy_l\bdy_\ell^\perp\Lam^{\be-2} q}{W^{\si-2,2}})\Sob{\Lam^\si \tht}{L^2}\notag\\
    &\le C(\Sob{\tht}{H^\si}\Sob{q}{H^{\be+1+\eps}}+\Sob{\tht}{H^{3+\eps}}\Sob{q}{H^{\si+\be-2}})\Sob{\Lam^\si \tht}{L^2}\label{est:I3a:first}\\
    &\le C\Sob{q}{H^{\si}}\Sob{\tht}{H^\si}^2\label{est:I3a:second}.
\end{align}
Applying the Cauchy-Schwarz inequality followed by \eqref{est:KP:A} with $f=\bdy_\ell^\perp \Lam^{\be-2}q$, $g=\bdy_l \bdy_\ell \tht$, and $p=2$, we obtain
\begin{align}
    |I_3^b|&\le C(\Sob{\bdy_\ell^\perp \Lam^{\be-2}q}{W^{\si-1,2}}\Sob{\bdy_l \bdy_\ell \tht}{L^\infty}+\Sob{\bdy_\ell^\perp \Lam^{\be-2}q}{W^{1,\infty}}\Sob{\bdy_l \bdy_\ell \tht}{W^{\si-2,2}})\Sob{\Lam^\si \tht}{L^2}\notag\\
    &\le C(\Sob{q}{H^{\si+\be-2}}\Sob{\tht}{^{H^{3+\eps}}}+\Sob{q}{H^{\be+1+\eps}}\Sob{\tht}{H^\si})\Sob{\Lam^\si \tht}{L^2}\label{est:I3b:first}\\
    &\le C\Sob{q}{H^\si}\Sob{\tht}{H^\si}^2\label{est:I3b:second}.
\end{align}
\subsubsection{$\text{Estimating }I_5$} Applying \eqref{Laplacian:split} and the product rule, we obtain
\begin{align*}
    I_5=&\lb \Lam^{\si-2}\bdy_l(\nabla^\perp(\bdy_l q)\cdot\nabla\tht),\Lam^{\si+\be-2}\tht \rb-\lb (\Lam^{\si-2}\nabla^\perp \Delta q)\cdot\nabla \tht, \Lam^{\si+\be-2}\tht \rb\\
    & +\lb \Lam^{\si +\frac{\be}{2}-3}\bdy_l(\nabla^\perp q \cdot \nabla (\bdy_l \tht)),\Lam^{\si+\frac{\be}{2}-1}\tht\rb-\lb \nabla^\perp q\cdot \nabla \Lam^{\si+\frac{\be}{2}-3}\Delta \tht, \Lam^{\si+\frac{\be}{2}-1}\tht\rb\\
    =&\lb [\Lam^{\si-2}\bdy_l,\bdy_\ell \tht]\bdy_\ell^\perp \bdy_l q, \Lam^{\si+\be-2}\tht\rb+\lb [\Lam^{\si+\frac{\be}{2}-3}\bdy_l,\bdy_\ell^\perp q]\bdy_\ell \bdy_l \tht,\Lam^{\si+\frac{\be}{2}-1}\tht \rb.
\end{align*}
Applying the Cauchy-Schwarz inequality followed by \eqref{est:KP:A} with $f=\bdy_\ell\tht$, $g= \bdy_\ell^\perp \bdy_l q$, and $p=2$, we obtain
\begin{align}
    |I_5^a|&\le C(\Sob{\bdy_\ell \tht}{W^{\si-1,2}}\Sob{\bdy_\ell^\perp \bdy_l q}{L^\infty}+\Sob{\bdy_\ell \tht}{W^{1,\infty}}\Sob{\bdy_\ell^\perp \bdy_l q}{W^{\si-2,2}})\Sob{\Lam^{\si+\be-2}\tht}{L^2}\notag\\
    &\le C(\Sob{\tht}{H^\si}\Sob{q}{H^{3+\eps}}+\Sob{\tht}{H^{3+\eps}}\Sob{q}{H^\si})\Sob{\Lam^{\si+\be-2}\tht}{L^2}\label{est:I5a:first}\\
    &\le C\Sob{q}{H^\si}\Sob{\tht}{H^\si}^2\label{est:I5a:second}.
\end{align}
Similarly, applying the Cauchy-Schwarz inequality followed by \eqref{est:KP:A} with $f=\bdy_\ell^\perp q$, $g=\bdy_l \bdy_\ell \tht$, and $p=2$, we obtain
\begin{align}
    |I^b_5|&\le C(\Sob{\bdy_\ell^\perp q}{W^{\si+\frac{\be}{2}-2,2}}\Sob{\bdy_\ell \bdy_l \tht}{L^\infty}+\Sob{\bdy_\ell^\perp q}{W^{1,\infty}}\Sob{\bdy_\ell \bdy_l \tht}{W^{\si+\frac{\be}{2}-3,2}})\Sob{\Lam^{\si+\frac{\be}{2}-1}\tht}{L^2}\notag\\
    &\le C\Sob{q}{H^{\si+\frac{\be}{2}-1}}\Sob{\tht}{H^{3+\eps}}+\Sob{q}{H^{3+\eps}}\Sob{\tht}{H^{\si+\frac{\be}{2}-1}}\Sob{\Lam^{\si+\frac{\be}{2}-1}\tht}{L^2}\label{est:I5b:first}\\
    &\le C\Sob{q}{H^\si}\Sob{\tht}{H^\si}^2\label{est:I5b:second}.
\end{align}
Collecting the estimates in \eqref{est:I1:second}, \eqref{est:I3a:second}, \eqref{est:I3b:second}, \eqref{est:I5a:second}, and \eqref{est:I5b:second}, we obtain the desired result.
\end{proof}
\begin{remark}
We remark that for $\si>4$ and $q=\tht$, it follows from \eqref{est:I1:first}, \eqref{est:I3a:first}, \eqref{est:I3b:first}, \eqref{est:I5a:first}, and \eqref{est:I5b:first}, in the proof of \cref{lem:law:1}, that
\begin{align}\label{est:sigma>4}
    |\lb \Div F_{\tht}^\be (\tht), (-\Delta)^\sigma \tht\rb|\le C_6(\be,\si) \Sob{\tht}{H^{\si-1}}\Sob{\tht}{H^\si}^2.
\end{align}
\end{remark}
We now state the continuity estimate alluded to earlier. With a constant which may become unbounded as $\be \to 1$, we obtain a bound only in terms of $\Sob{q}{H^\si}$ while additional $L^1$ regularity ensures a bound valid even when $\be \to 1$.
\begin{lemma}\label{lem:law:2} Let $\be\in (1,2)$ and $\sigma>3$. Then we have 
\begin{align*}
    \Sob{\Div F^\be_q(\tht)}{H^\si}\le \min \{C_7(\be,\si)\Sob{q}{H^\si},C_8(\be,\si)(\Sob{q}{H^\si}+\Sob{q}{L^1})\}\Sob{\tht}{H^{\si+1}},
\end{align*}
where $C_7, C_8$ are positive constants. Furthermore, $C_8$ remains uniformly bounded as $\be \to 1$.
\end{lemma}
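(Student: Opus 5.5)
Expanding the divergence, I write the nonlinearity as a commutator. Since $\Div((\nabla^\perp\Lam^{\be-2}q)\tht)=\nabla^\perp\Lam^{\be-2}q\cdot\nabla\tht$ and $\Div\Lam^{\be-2}((\nabla^\perp\tht)q)=\Lam^{\be-2}\nabla\cdot((\nabla^\perp\tht)q)=\Lam^{\be-2}(\nabla q\cdot\nabla^\perp\tht)=-\Lam^{\be-2}\nabla^\perp\cdot(q\nabla\tht)$, I have
\begin{align*}
\Div F_q^\be(\tht)=\nabla^\perp\Lam^{\be-2}q\cdot\nabla\tht-\nabla^\perp\Lam^{\be-2}\cdot(q\nabla\tht)=-\bigl[\bdy^\perp_\ell\Lam^{\be-2},\bdy_\ell\tht\bigr]q .
\end{align*}
So $\Div F_q^\be(\tht)=-\sum_\ell[\bdy_\ell^\perp\Lam^{\be-2}\cdot,g_\ell]q$ with $g_\ell=\bdy_\ell\tht$, which is exactly the structure handled by \cref{lem:Li2019} and \cref{commutator:estimate:1}. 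I will use the norm \eqref{def:inhom:Sob:norm}, and so estimate the $L^2$ norm and the $\Hdot^\si$ norm separately.

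\emph{$L^2$ part.} For $C_7$ I apply \cref{lem:Li2019} with $p=2$, $f=q$, $g=\bdy_\ell\tht$. This gives $\Sob{q}{L^2}\Sob{\Lam^{\be-1}\nabla\tht}{L^\infty}\le C\Sob{q}{H^\si}\Sob{\tht}{H^{\si+1}}$ by Sobolev embedding, since $\be+\eps<\si+1-1$. For $C_8$ I apply \eqref{commutator:1:est:1} with $p=2$, $q_1=\infty$, $r_1=2$. This gives $C_2\Sob{\nabla^2\tht}{L^\infty}\Sob{q}{L^2}+C_3\Sob{\nabla\tht}{L^\infty}\Sob{q}{L^1}$, and both constants stay bounded as $\be\to1$.

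\emph{$\Hdot^\si$ part.} I apply $\Lam^\si$ and distribute with the Kato--Ponce estimates, separating the term in which all $\si$ derivatives fall on $q$:
\begin{align*}
\Lam^\si[\bdy^\perp_\ell\Lam^{\be-2},\bdy_\ell\tht]q=[\bdy^\perp_\ell\Lam^{\be-2},\bdy_\ell\tht]\Lam^\si q+\Bigl(\bdy^\perp_\ell\Lam^{\be-2}\bigl[\Lam^\si,\bdy_\ell\tht\bigr]q-\bigl[\Lam^\si,\bdy_\ell\tht\bigr]\bdy^\perp_\ell\Lam^{\be-2}q\Bigr).
\end{align*}
\textbf{First term.} This is treated as $I_1$ in the proof of \cref{lem:law:1}. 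For $C_7$ I use \cref{lem:Li2019}: $\Sob{\Lam^\si q}{L^2}\Sob{\Lam^{\be}\tht}{L^\infty}$. For $C_8$ I use \eqref{commutator:1:est:2} with $p=2$, $q_1=\infty$, $r_1=2$, $q_2=r_2=2$. This yields $\Sob{\nabla^2\tht}{L^\infty}\Sob{q}{\Hdot^\si}+\Sob{\nabla\tht}{L^\infty}\Sob{q}{\Hdot^{\si-1}}+\Sob{\nabla^2\tht}{L^2}\Sob{q}{\Hdot^{\si-1}}$, all with constants bounded as $\be\to1$.

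\textbf{Remaining terms.} Here I use \eqref{est:KP:A} with $p=2$ for $[\Lam^\si,\bdy_\ell\tht]$. The second piece, $[\Lam^\si,\bdy_\ell\tht]\bdy^\perp_\ell\Lam^{\be-2}q$, is bounded by
\begin{align*}
\Sob{\Lam^{\si+1}\tht}{L^2}\Sob{\nabla\Lam^{\be-2}q}{L^\infty}+\Sob{\nabla^2\tht}{L^\infty}\Sob{\Lam^{\si+\be-2}q}{L^2}.
\end{align*}
The first piece, $\bdy^\perp\Lam^{\be-2}[\Lam^\si,\bdy\tht]q$, costs $\be-1\in(0,1)$ extra derivatives. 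I handle it by applying \eqref{est:KP:A}-type bounds with the operator $\bdy\Lam^{\si+\be-2}$ in place of $\Lam^\si$, as permitted by the variant in \cite{Li2019}. This produces $\Sob{\tht}{H^{\si+\be}}$-type terms, which are acceptable since $\be<2$ but $\si+\be>\si+1$... This overshoots, so I must instead keep the combined difference.

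The main obstacle is the regrouping that stops derivatives from exceeding $\si+1$ on $\tht$, mirroring $I_3$ and $I_5$ in \cref{lem:law:1}. I will split exactly as there, using \eqref{Laplacian:split} to write $\Lam^{\si}=-\Lam^{\si-2}\bdy_l\bdy_l$ and the product rule. This reduces the remainder to commutators $[\Lam^{\si-2}\bdy_l,\bdy_\ell\tht]\bdy_l\bdy^\perp_\ell\Lam^{\be-2}q$ and $[\Lam^{\si-2}\bdy_l,\bdy^\perp_\ell\Lam^{\be-2}q]\bdy_l\bdy_\ell\tht$, together with the analogous $I_5$-type terms in which $\Lam^{\be-2}$ sits outside. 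Each is then estimated in $L^2$ through \eqref{est:KP:A} with at most $\si+1$ derivatives on $\tht$ and at most $\si+\be-2\le\si$ on $q$.

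The only $\be$-dependence comes from the low-frequency factors $\nabla\Lam^{\be-2}q$ and $\nabla^2\Lam^{\be-2}q$ in $L^\infty$. For $C_7$ I bound these by Sobolev embedding via $\Sob{q}{H^\si}$, which blows up as $\be\to1$ because the frequencies near zero are not controlled. For $C_8$ I split frequencies: the part with $|\xi|\ge1$ is handled by $H^\si$, and for $|\xi|\le1$ I use $\Sob{\widehat q}{L^\infty}\le\Sob{q}{L^1}$ to get $\int_{|\xi|\le1}|\xi|^{\be-1}\,d\xi$, which is bounded uniformly in $\be$. This is where the $L^1$ norm enters.
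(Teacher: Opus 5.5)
Your reduction $\Div F^\be_q(\tht)=-[\bdy^\perp_\ell\Lam^{\be-2},\bdy_\ell\tht]q$ matches the paper. So do your $L^2$ estimates, the three-term expansion of $\Lam^\si$ applied to this commutator, and your bounds for $[\bdy^\perp_\ell\Lam^{\be-2},\bdy_\ell\tht]\Lam^\si q$ and $[\Lam^\si,\bdy_\ell\tht]\bdy^\perp_\ell\Lam^{\be-2}q$ (the paper's $L^2$ bounds, $J_1$ and $J_2$).

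The gap is the remaining piece $\bdy^\perp_\ell\Lam^{\be-2}[\Lam^\si,\bdy_\ell\tht]q$. You rightly flag it as overshooting, but you never actually control it.

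\begin{itemize}
\item Writing $\Lam^\si=-\Lam^{\si-2}\bdy_l\bdy_l$ and using the product rule only reshuffles derivatives inside the commutator. While the order-$(\be-1)$ operator $\bdy^\perp\Lam^{\be-2}$ stays outside, the contribution $\bdy^\perp_\ell\Lam^{\be-2}\bigl((\Lam^\si\bdy_\ell\tht)\,q\bigr)$ still needs $\si+\be>\si+1$ derivatives of $\tht$.
\item The $I_3$/$I_5$ manipulations of \cref{lem:law:1} do not transfer as stated. They rely on pairing with $\Lam^\si\tht$, so that $I_2=I_4=0$ and $\Lam^{\be-2}$, $\Lam^{\frac{\be}{2}-1}$ can be moved onto the test function. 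You instead propose to estimate each term ``in $L^2$''.
\item A term with $\Lam^{\be-2}$ on the outside cannot be bounded in $L^2$ that way, since $|\xi|^{\be-2}$ is singular at $\xi=0$. Hardy--Littlewood--Sobolev or a low-frequency $L^1$ bound would give constants that blow up as $\be\to1$, which defeats the purpose for $C_8$.
\end{itemize}

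The missing idea is the null structure $\sum_\ell\bdy^\perp_\ell\bdy_\ell\tht=0$, which gives
\begin{align*}
\sum_\ell\bdy^\perp_\ell[\Lam^\si,\bdy_\ell\tht]q=\sum_\ell[\Lam^\si,\bdy_\ell\tht]\bdy^\perp_\ell q .
\end{align*}
So the bad piece equals $\Lam^{\be-2}[\Lam^\si,\bdy_\ell\tht]\bdy^\perp_\ell q$, with a negative-order operator outside. Now use duality against $\Lam^\si\Gamma$, where $\Gamma=\Div F^\be_q(\tht)$. By \eqref{est:KP:A} and $\Sob{\Lam^{\si+\be-2}\Gamma}{L^2}\le\Sob{\Gamma}{H^\si}$ (valid since $0<\si+\be-2<\si$), this yields
\begin{align*}
\bigl|\lb[\Lam^\si,\bdy_\ell\tht]\bdy^\perp_\ell q,\Lam^{\si+\be-2}\Gamma\rb\bigr|\le C\bigl(\Sob{\tht}{H^{\si+1}}\Sob{\nabla q}{L^\infty}+\Sob{\nabla^2\tht}{L^\infty}\Sob{q}{H^\si}\bigr)\Sob{\Gamma}{H^\si},
\end{align*}
with $C$ independent of $\be$. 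This is the paper's $J_3$; no Laplacian splitting is needed.

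A smaller point: your account of where the $\be\to1$ degeneracy enters is off. For $\be\ge1$, $|\xi|^{\be-1}$ and $|\xi|^{\be}$ are bounded near $\xi=0$, so $\Sob{\nabla\Lam^{\be-2}q}{L^\infty}+\Sob{\nabla^2\Lam^{\be-2}q}{L^\infty}\le C\Sob{q}{H^\si}$ uniformly in $\be$, and Sobolev embedding does not blow up. The only non-uniform constant is $C_1(\be,p)$ of \cref{lem:Li2019}. Hence $\Sob{q}{L^1}$ enters only through \eqref{commutator:1:est:1} in the $L^2$ part, and your low-frequency $L^1$ argument in the $\Hdot^\si$ part is valid but superfluous.
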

\begin{proof}
    Let us denote by 
    \[\Gamma=\Div F^\be_q(\tht).\]
    We will estimate the norms $\Sob{\Gamma}{L^2}^2$ and $\Sob{\Lam^\si \Gamma}{L^2}^2$ separately. Observe that
    \[ \Sob{\Gamma}{L^2}^2=-\lb [\bdy_\ell^\perp \Lam^{\be-2},\bdy_\ell \tht]q,\Gamma\rb.\]
    Applying the Cauchy-Schwarz inequality followed by \cref{lem:Li2019} with $f=q$, $g=\bdy_\ell \tht$, and $p=2$, we obtain
    \begin{align}
        \Sob{\Gamma}{L^2}^2
        &\le C\Sob{\nabla^\perp \Lam^{\be-2}\nabla \tht}{L^\infty}\Sob{q}{L^2}\Sob{\Gamma}{L^2}\notag\\
        &\le C\Sob{\tht}{H^{\be+1+\eps}}\Sob{q}{L^2}\Sob{\Gamma}{L^2}\label{est:L2:first},
    \end{align}
    and applying the Cauchy-Schwarz inequality followed by \cref{commutator:estimate:1}, \eqref{commutator:1:est:1} with $f=q$, $g=\bdy_\ell \tht$, and $p=2, q_1=\infty, r_1=2$, we obtain
    \begin{align}
        \Sob{\Gamma}{L^2}^2
        \le & C_2\Sob{\nabla^2 \tht}{L^\infty}\Sob{q}{L^2}\Sob{\Gamma}{L^2}\notag\\
        & +C_3\Sob{\nabla \tht}{L^\infty}\Sob{q}{L^1}\Sob{\Gamma}{L^2}\notag\\
        \le & C\Sob{\tht}{H^\si}(\Sob{q}{L^2}+\Sob{q}{L^1})\Sob{\Gamma}{L^2}\label{est:L2:second}.
    \end{align}
    Next, we decompose $\Sob{\Lam^\si \Gamma}{L^2}^2$ as follows:
    \begin{align*}
        \Sob{\Lam^\si \Gamma}{L^2}^2&=\lb \Lam^\si (\nabla^\perp \Lam^{\be-2}q\cdot \nabla \tht),\Lam^\si \Gamma\rb-\lb \Lam^{\si+\be-2}(\nabla^\perp q \cdot \nabla \tht),\Lam^\si \Gamma\rb\\
        &=-\lb [\bdy_\ell^\perp\Lam^{\be-2},\bdy_\ell \tht]\Lam^\si q, \Lam^\si \Gamma\rb+ \lb [\Lam^\si, \bdy_\ell \tht]\bdy_\ell^\perp \Lam^{\be-2}q,\Lam^\si \Gamma\rb\\
        &\quad -\lb [\Lam^\si, \bdy_\ell \tht]\bdy_\ell^\perp q,\Lam^{\si+\be-2}\Gamma \rb\\
        &=J_1+J_2+J_3.
    \end{align*}
    Applying the Cauchy-Schwarz inequality followed by \cref{commutator:estimate:1}, \eqref{commutator:1:est:2} with $f=\bdy_\ell \tht$, $g=\Lam^\si q$, and $p=2, q_1=\infty, r_1=2, ,q_2=r_2=2$ , we obtain 
    \begin{align}\label{est:J1}
    |J_1|&\le C_2\Sob{ \nabla^2 \tht}{L^\infty}\Sob{q}{\Hdot^\si}\Sob{\Gamma}{\Hdot^\si}+C_3\Sob{\nabla \tht}{L^\infty}\Sob{q}{\Hdot^{\si-1}}\Sob{\Gamma}{\Hdot^\si}\notag\\
    &\quad + C_4\Sob{\nabla^2 \tht}{L^{2}}\Sob{q}{\dot{H}^{\si-1}}\Sob{ \Gamma}{\Hdot^{\si}}\notag\\
    &\le C\Sob{q}{H^\si}\Sob{\tht}{H^\si}\Sob{\Gamma}{H^\si}.
\end{align}
Applying the Cauchy-Schwarz inequality followed by \eqref{est:KP:A} with $f=\bdy_\ell \tht$, $g=\bdy_\ell^\perp \Lam^{\be-2}q$, and $p=2$, we obtain
    \begin{align}\label{est:J2}
        |J_2|&\le C(\Sob{\bdy_\ell \tht}{W^{\si,2}}\Sob{\bdy_\ell^\perp q}{L^\infty}+\Sob{\bdy_\ell \tht}{W^{1,\infty}}\Sob{\bdy_\ell^\perp q}{W^{\si-1,2}})\Sob{\Lam^\si \Gamma}{L^2}\notag\\
        &\le C(\Sob{\tht}{H^{\si+1}}\Sob{q}{H^{2+\eps}}+\Sob{\tht}{H^{3+\eps}}\Sob{q}{H^\si})\Sob{\Lam^\si \Gamma}{L^2}\notag\\
        &\le C\Sob{q}{H^\si}\Sob{\tht}{H^{\si+1}}\Sob{\Lam^\si \Gamma}{L^2}.
    \end{align}
    Similarly, applying the Cauchy-Schwarz inequality followed by \eqref{est:KP:A} with $f=\bdy_\ell \tht$, $g=\bdy_\ell^\perp q$, and $p=2$, we obtain
    \begin{align}\label{est:J3}
        |J_3|&\le C(\Sob{\bdy_\ell \tht}{W^{\si,2}}\Sob{\bdy_\ell^\perp q}{L^\infty}+\Sob{\bdy_\ell \tht}{W^{1,\infty}}\Sob{\bdy_\ell^\perp q}{W^{\si-1,2}})\Sob{\Lam^{\si+\be-2}\tht}{L^2}\notag\\
        &\le C(\Sob{\tht}{H^{\si+1}}\Sob{q}{H^{2+\eps}}+\Sob{\tht}{H^{3+\eps}}\Sob{q}{H^\si})\Sob{\Lam^{\si+\be-2}\tht}{L^2}\notag\\
        &\le C\Sob{q}{H^\si}\Sob{\tht}{H^{\si+1}}\Sob{\Gamma}{H^\si}.
    \end{align}
    From the estimates in \eqref{est:L2:first}, \eqref{est:J1}-\eqref{est:J3}, we obtain
    \begin{align*}
        \Sob{\Gamma}{H^\si}^2 \le C_5C\Sob{q}{H^\si}\Sob{\tht}{H^{\si+1}}\Sob{\Gamma}{H^\si},
    \end{align*}
    and from the estimates in \eqref{est:L2:second}, \eqref{est:J1}-\eqref{est:J3}, we obtain
    \begin{align*}
        \Sob{\Gamma}{H^\si}^2\le C_6(\Sob{q}{H^\si}+\Sob{q}{L^1})\Sob{\tht}{H^{\si+1}}\Sob{\Gamma}{H^\si},
    \end{align*}
    where $C_6$ is uniformly bounded as $\be\to 1$. This completes the proof.
\end{proof}
\section{Proof of \cref{main:T:1}}
We will now carry out the proof of \cref{main:T:1}. We do so by studying the evolution of the difference of $\tht^\be$ and $\tht^{\be_0}$.
Denote by
\[\overline{\tht}=\tht^{\be}-\tht^{\be_0}.\]
Then, $\overline{\tht}$ satisfies the equation
\begin{align*}
    \bdy_t \overline{\tht}+u^\be\cdot \nabla \tht^\be-u^{\be_0} \cdot \nabla \tht^{\be_0}.
\end{align*}
Observe that 
\begin{align}\label{theta:bar:equation}
    u^\be\cdot \nabla \tht^\be-u^{\be_0} \cdot \nabla \tht^{\be_0}&=\Div F^\be_{-\tht^\be}(\tht^\be)-\Div F^{\be_0}_{-\tht^{\be_0}}(\tht^{\be_0})\notag\\
    &=\Div F^\be_{-\overline{\tht}}(\overline{\tht})+\Div F^\be_{-\tht^{\be_0}}(\overline{\tht})+\Div F^\be_{-\overline{\tht}}(\tht^{\be_0})\notag\\
    &\quad+(\nabla^\perp \Lam^{\be-2}-\nabla^{\perp}\Lam^{\be_0 -2})\tht^{\be_0}\cdot\nabla \tht^{\be_0}.
\end{align}
Let us denote by 
\[\mathcal{H}^{\be,\be_0}(\tht^{\be_0})=(\nabla^\perp \Lam^{\be-2}-\nabla^{\perp}\Lam^{\be_0 -2})\tht^{\be_0}.\]
Taking the $L^2-$ inner product of \eqref{theta:bar:equation} with $\overline{\tht}$, we obtain
\begin{align}
    \frac{1}{2}\frac{d}{dt}\Sob{\overline{\tht}(t)}{L^2}^2=&\lb \Div F^\be_{\overline{\tht}}(\overline{\tht}),\overline{\tht} \rb+ \lb \Div F^\be_{\tht^{\be_0}}(\overline{\tht}),\overline{\tht} \rb\notag\\
    &+\lb \Div F^\be_{\overline{\tht}}(\tht^{\be_0}), \overline{\tht} \rb -\lb \mathcal{H}^{\be,\be_0}(\tht^{\be_0})\cdot\nabla \tht^{\be_0}, \overline{\tht}\rb\notag\\
    =&K_1+K_2+K_3+K_4. \label{energy:K}
\end{align}
Applying the Cauchy-Schwarz inequality followed by \cref{lem:Li2019} with $f=\overline{\tht}$, $g=\nabla \overline{\tht}$, and $p=2$, we obtain
\begin{align}|K_1|=|\lb [\nabla^\perp \Lam^{\be-2}\cdot, \nabla \overline{\tht}]\overline{\tht},\overline{\tht} \rb|&\le C\Sob{\Lam^{\be-1}\nabla \overline{\tht}}{L^\infty}\Sob{\bar{
\tht
}}{L^2}^2\notag\\
&\le  C\Sob{\overline{\tht}}{H^{\be+1+\eps}}\Sob{\bar{
\tht
}}{L^2}^2\notag\\&
\le  C\Sob{\overline{\tht}}{H^{s}}\Sob{\bar{
\tht
}}{L^2}^2.\label{est:K1}
\end{align}
Similarly, we obtain
\begin{align}
    |K_2|\le & C\Sob{\overline{\tht}}{H^{\be+1+\eps}}\Sob{\tht^{\be_0}}{L^2}\Sob{\overline{\tht}}{L^2}\notag\\
    \le &C\Sob{\overline{\tht}}{H^{s}}\Sob{\tht^{\be_0}}{L^2}\Sob{\overline{\tht}}{L^2},\label{est:K2}
\end{align}
and
\begin{align}
    |K_3|\le & C\Sob{\tht^{\be_0}}{H^{\be+1+\eps}}\Sob{\overline{\tht}}{L^2}^2\notag\\
    \le & C\Sob{\tht^{\be_0}}{H^{s}}\Sob{\overline{\tht}}{L^2}^2.\label{est:K3}
\end{align}
By the Cauchy-Schwarz inequality, we obtain
\begin{align*}
    |K_4|\le \Sob{\mathcal{H}^{\be,\be_0}(\tht^{\be_0})\cdot\nabla \tht^{\be_0}}{L^2}\Sob{\overline{\tht}}{L^2}.
\end{align*}
Next, taking the inner product of \eqref{theta:bar:equation} with $(-\Delta)^s \overline{\tht}$, we obtain
\begin{align}
    \frac{1}{2}\frac{d}{dt}\Sob{\overline{\tht}(t)}{\Hdot^s}^2=&\lb \Div F^\be_{\overline{\tht}}(\overline{\tht}),(-\Delta)^s \overline{\tht} \rb+ \lb \Div F^\be_{\tht^{\be_0}}(\overline{\tht}),(-\Delta)^s \overline{\tht} \rb\notag\\
    &+\lb \Div F^\be_{\overline{\tht}}(\tht^{\be_0}),(-\Delta)^s \overline{\tht} \rb -\lb \mathcal{H}^{\be,\be_0}(\tht^{\be_0})\cdot\nabla \tht^{\be_0},(-\Delta)^s \overline{\tht}\rb\notag\\
    = &L_1+L_2+L_3+L_4.\label{energy:L}
 \end{align}
 Applying \cref{lem:law:1} with $q=\tht=\overline{\tht}$, and $\si=s$, we obtain
 \begin{align}\label{est:L1}
     |L_1|& \le C\Sob{\overline{\tht}}{H^s}^3.
     \end{align}
Similarly, applying \cref{lem:law:1} with $q=\tht^{\be_0}$, $\tht=\overline{\tht}$, and $\si=s$, we obtain
     \begin{align}\label{est:L2}
     |L_2|&\le C\Sob{\tht^{\be_0}}{H^s}\Sob{\overline{\tht}}{H^s}^2.
 \end{align}
 Applying the Cauchy-Schwarz inequality followed by \cref{lem:law:2} with $q=\overline{\tht}$, $\tht=\tht^{\be_0}$, and $\si=s$, we obtain
 \begin{align}\label{est:L3}
     |L_3|\le C\Sob{\tht^{\be_0}}{H^{s+1}}\Sob{\overline{\tht}}{H^s}^2.
 \end{align}
 Applying the Cauchy-Schwarz inequality, we obtain
 \begin{align*}
     |L_4|\le C\Sob{\mathcal{H}^{\be,\be_0}(\tht^{\be_0})\cdot\nabla \tht^{\be_0}}{\Hdot^s}\Sob{\overline{\tht}}{\Hdot^s}.
 \end{align*}
 Applying \eqref{est:Leibnitz} with $p_1=p_3=2$ and $p_2=p_4=\infty$, we obtain
 \begin{align}
     |K_4|, |L_4|&\le C\Sob{\mathcal{H}^{\be,\be_0}(\tht^{\be_0})\cdot\nabla \tht^{\be_0}}{H^s}\Sob{\overline{\tht}}{H^s}\notag\\
     &\le C(\Sob{\mathcal{H}^{\be,\be_0}(\tht^{\be_0})}{H^s}\Sob{\nabla \tht^{\be_0}}{L^\infty}+\Sob{\mathcal{H}^{\be,\be_0}(\tht^{\be_0})}{L^\infty}\Sob{\nabla \tht^{\be_0}}{H^s})\Sob{\overline{\tht}}{H^s}\notag\\
     &\le C\Sob{\mathcal{H}^{\be,\be_0}(J^s\tht^{\be_0})}{L^2}\Sob{\tht^{\be_0}}{H^{s+1}}\Sob{\overline{\tht}}{H^s},\label{est:KL4}
 \end{align}
 where we used the fact that $H^s \hookrightarrow L^\infty$ for $s>1$.
 Plugging the estimates \eqref{est:K1}-\eqref{est:K3}, and \eqref{est:KL4} in \eqref{energy:K}, we obtain
 \begin{align}\label{est:energy:K}
     \frac{1}{2}\frac{d}{dt}\Sob{\overline{\tht}(t)}{L^2}^2\le& C\Sob{\overline{\tht}}{H^{s}}\Sob{\bar{
\tht
}}{L^2}^2+C\Sob{\overline{\tht}}{H^{s}}\Sob{\tht^{\be_0}}{L^2}\Sob{\overline{\tht}}{L^2}+C\Sob{\tht^{\be_0}}{H^{s}}\Sob{\overline{\tht}}{L^2}^2\notag\\&+C\Sob{\mathcal{H}^{\be,\be_0}(J^s\tht^{\be_0})}{L^2}\Sob{\tht^{\be_0}}{H^{s+1}}\Sob{\overline{\tht}}{H^s}.
 \end{align}
 Similarly, plugging the estimates \eqref{est:L1}-\eqref{est:L3}, and \eqref{est:KL4} in \eqref{energy:L}, we obtain
 \begin{align}  \label{est:energy:L}
 \frac{1}{2}\frac{d}{dt}\Sob{\overline{\tht}(t)}{\Hdot^s}^2 \le & C\Sob{\overline{\tht}}{H^s}^3+C\Sob{\tht^{\be_0}}{H^s}\Sob{\overline{\tht}}{H^s}^2+C\Sob{\tht^{\be_0}}{H^{s+1}}\Sob{\overline{\tht}}{H^s}^2\notag\\
 &+C\Sob{\mathcal{H}^{\be,\be_0}(J^s\tht^{\be_0})}{L^2}\Sob{\tht^{\be_0}}{H^{s+1}}\Sob{\overline{\tht}}{H^s}.
 \end{align}
 Adding \eqref{est:energy:K} and \eqref{est:energy:L} together, we obtain
 \begin{align}\label{est:energy:K+L}
     \frac{d}{dt}\Sob{\overline{\tht}(t)}{H^s}\le C\Sob{\overline{\tht}}{H^s}^2+C\Sob{\tht^{\be_0}}{H^{s+1}}\Sob{\overline{\tht}}{H^s}+C\Sob{\mathcal{H}^{\be,\be_0}(J^s\tht^{\be_0})}{L^2}\Sob{\tht^{\be_0}}{H^{s+1}}.
 \end{align}
 Now, we will obtain an estimate for $\Sob{\mathcal{H}^{\be,\be_0}(J^s\tht^{\be_0})}{L^2}$. First, we recall the singular integral representation of $\mathcal{H}^{\be,\be_0}(\tht^{\be_0})$: 
 \begin{align*}
     \mathcal{H}^{\be,\be_0}(\tht^{\be_0})(x)=\text{P.V. }C_\be \int_{\RR^2}\left(\frac{(x-y)^{\perp}}{|x-y|^{2+\be}}-\frac{(x-y)^{\perp}}{|x-y|^{2+\be_0}}\right)\tht^{\be_0}(y)\,dy.
 \end{align*}
 For simplicity, we will take $C_\be=1$. For $0<\veps<1$, to be determined later, we write
 \begin{align}
     \mathcal{H}^{\be,\be_0}(J^s\tht^{\be_0})(x)&=\int_{\RR^2}\left(\frac{(x-y)^{\perp}}{|x-y|^{2+\be}}-\frac{(x-y)^{\perp}}{|x-y|^{2+\be_0}}\right)J^s\tht^{\be_0}(y)\,dy\notag\\
     &=\left(\int_{|x-y|\le \veps}+\int_{1>|x-y|\ge \veps}+\int_{|x-y|\ge 1} \right)\notag\\
     &\quad \left(\frac{(x-y)^{\perp}}{|x-y|^{2+\be}}-\frac{(x-y)^{\perp}}{|x-y|^{2+\be_0}}\right)J^s \tht^{\be_0}(y)\,dy]\notag\\
     &=M_1+M_2+M_3.\label{energy:H:M}
 \end{align}
 Using the fact that 
 \begin{align*}
     \int_{|z|=1}\frac{z^\perp}{|z|^{2+\be}}\,ds=\int_{|z|=1}\frac{z^\perp}{|z|^{2+\be_0}}\,ds=0,
 \end{align*}
 we can express $M_1$ and $M_2$ as 
 \begin{align*}
     M_1=\int_{|x-y|\le \veps}\left(\frac{(x-y)^{\perp}}{|x-y|^{2+\be}}-\frac{(x-y)^{\perp}}{|x-y|^{2+\be_0}}\right)\left(J^s \tht^{\be_0}(y)-J^s \tht^{\be_0}(x)\right)\,dy,
 \end{align*}
 and 
 \begin{align*}
     M_2=\int_{1>|x-y|\ge \veps}\left(\frac{(x-y)^{\perp}}{|x-y|^{2+\be}}-\frac{(x-y)^{\perp}}{|x-y|^{2+\be_0}}\right)\left(J^s \tht^{\be_0}(y)-J^s \tht^{\be_0}(x)\right)\,dy,
 \end{align*}
 Using the mean value theorem and making a change of variables, we obtain
 \begin{align*}
     |M_1|\le \int_0^1\int_{|z|\le \veps}\left(\frac{1}{|z|^{\be}}+\frac{1}{|z|^{\be_0}}\right)|\nabla J^s\tht^{\be_0}(x-\tau z)|\,dz\,d\tau.
 \end{align*}
 Taking the $L^2-$norm on both sides and applying the Minkowski inequality, we obtain
 \begin{align*}
     \Sob{M_1}{L^2}\le \int_0^1\int_{|z|\le \veps}\left(\frac{1}{|z|^{\be}}+\frac{1}{|z|^{\be_0}}\right)\Sob{\nabla J^s \tht^{\be_0}(\cdot-\tau z)}{L^2}\,dz\,d\tau.
 \end{align*}
 Using the translation-invariance of the Lebesgue measure, we conclude that
 \begin{align}\label{est:M1}
     \Sob{M_1}{L^2}\le C\left(\frac{\veps^{2-\be}}{2-\be}+\frac{\veps^{2-\be_0}}{2-\be_0}\right)\Sob{J^{s+1}\tht^{\be_0}}{L^2}.
 \end{align}
 To estimate $M_2$ and $M_3$, we consider the two cases:
 \subsubsection{Case $\be_0>\be$:} Using the mean value theorem, we obtain
 \begin{align*}
     |M_2|\le|\be-\be_0| \int_{1>|x-y|\ge \veps}\frac{|\ln|x-y||}{|x-y|^{1+\be_0}}|J^s \tht^{\be_0}(y)- J^s \tht^{\be_0}(x)|\,dy.
 \end{align*}
 Using the mean value theorem, making a change of variables, and applying the Minkowski inequality, we obtain
 \begin{align}
     \Sob{M_2}{L^2}&\le |\be-\be_0|\left(\int_{1>|z|\ge \veps}\frac{|\ln|z||}{|z|^{\be_0}}\,dz\right)\Sob{J^{s+1}\tht^{\be_0}}{L^2}\notag\\
     &\le \frac{C}{2-\be_0}|\be-\be_0||\ln \veps|\Sob{\tht^{\be_0}}{H^{s+1}}\label{est:M2:case1}.
 \end{align}
 Let $\kap\in (0,(\be-1)/2)$. Then, we have $\ln|x-y|\le C|x-y|^\kap$ for $|x-y|\ge 1$. Therefore, we have
 \begin{align*}
     |M_3|\le |\be-\be_0|\int_{|x-y|\ge 1}\frac{|\ln|x-y||}{|x-y|^{1+\be}}|J^s \tht^{\be_0}(y)|\,dy.
 \end{align*}
 Making a change of variables and applying the Young's convolution inequality, we obtain
 \begin{align}\label{est:M3:case1}
     \Sob{M_3}{L^2}&\le |\be-\be_0|\left(\int_{|z|\ge 1}\frac{|\ln|z||}{|z|^{1+\be}}\,dz\right)\Sob{J^s\tht^{\be_0}}{L^2}\notag\\
     &\le |\be-\be_0|\left(\int_{|z|\ge 1}\frac{1}{|z|^{1+\be-\kap}}\,dz\right)\Sob{J^s \tht^{\be_0}}{L^2}\notag\\
     &\le \frac{C}{\be-1}|\be-\be_0|\Sob{\tht^{\be_0}}{H^s}.
 \end{align}
 \subsubsection{Case $\be>\be_0$:} This case is treated in a similar way by exchanging the positions of $\be$ and $\be_0$. In this case, we obtain the following bounds:
 \begin{align}
      \Sob{M_2}{L^2}&\le  \frac{C}{2-\be}|\be-\be_0||\ln \veps|\Sob{\tht^{\be_0}}{H^{s}},\label{est:M2:case2}\\
      \Sob{M_3}{L^2}&\le \frac{C}{\be_0-1}|\be-\be_0|\Sob{\tht^{\be_0}}{H^s}.\label{est:M3:case2}
 \end{align}
 Collecting the estimates in \eqref{est:M1}-\eqref{est:M3:case2}, we obtain
 \begin{align}\label{est:H}
     \Sob{\mathcal{H}^{\be,\be_0}(J^s\tht^{\be_0})}{L^2}\le C(\veps^{2-\be}+\veps^{2-\be_0}+|\be-\be_0|+|\be-\be_0||\ln \veps|)\Sob{\tht^{\be_0}}{H^{s+1}}.
 \end{align}
 
 We now set $\veps=|\be-\be_0|$ and use \eqref{est:H} in \eqref{est:energy:K+L}, to obtain
 \begin{align}\label{est:energy:K+L:2}
     \frac{d}{dt}\Sob{\overline{\tht}(t)}{H^s}\le& C\Sob{\overline{\tht}}{H^s}^2+C\Sob{\tht^{\be_0}}{H^{s+1}}\Sob{\overline{\tht}}{H^s}\notag\\
     &+C\left(|\be-\be_0|^{2-\be}+|\be-\be_0|^{2-\be_0}+|\be-\be_0||\ln |\be-\be_0||\right)\Sob{\tht^{\be_0}}{H^{s+1}}^2.
 \end{align}
 Denote by
 \begin{align*}
     y(t)=\Sob{\overline{\tht}}{H^s}\text{exp}\left(-C\int_0^t\Sob{\tht^{\be_0}}{H^{s+1}}\,ds\right).
 \end{align*}
Multiplying \eqref{est:energy:K+L:2} by $\text{exp}\left(-C\int_0^t\Sob{\tht^{\be_0}}{H^{s+1}}\,ds\right)$, we obtain
 \begin{align*}
     \frac{dy(t)}{dt}\le C\left(|\be-\be_0|^{2-\be}+|\be-\be_0|^{2-\be_0}+|\be-\be_0||\ln |\be-\be_0||\right)A(t)+By^2(t),
 \end{align*}
 where 
 \begin{align*}
     A(t)=\Sob{\tht^{\be_0}}{H^{s+1}}^2\text{exp}\left(-C\int_0^t\Sob{\tht^{\be_0}}{H^{s+1}}\,ds\right),
 \end{align*}
 and
 \begin{align*}
     B=C\text{exp}\left(C\int_0^T\Sob{\tht^{\be_0}}{H^{s+1}}\,ds\right).
 \end{align*}
 Applying \cref{lem:Gronwall:variant} with $F(t)=A(t)$ and $G=B$, we deduce that there exists a $\delta>0$ depending on $T$ and $\int_0^T \Sob{\tht^{\be_0}}{H^{s+1}}\,dt$ such that for $\be \in (1,2)$ and satisfying $|\be-\be_0|<\delta$, we have
 \begin{align*}
     y(t)\le C\left(|\be-\be_0|^{2-\be}+|\be-\be_0|^{2-\be_0}+|\be-\be_0||\ln |\be-\be_0||\right)\int_0^T A(t)\,dt.
 \end{align*}
 Therefore, we have
 \begin{align*}
     \Sob{\overline{\tht}}{H^s}\le C\left(|\be-\be_0|^{2-\be}+|\be-\be_0|^{2-\be_0}+|\be-\be_0||\ln |\be-\be_0||\right),
 \end{align*}
 where $C$ is a constant depending on $T$ and $\int_0^T \Sob{\tht^{\be_0}}{H^{s+1}}\,dt$. \par
 Let $\tht^{\be_0}\in C([0,T_0];H^{s+1})$, $s>3$. We will now show that the interval of existence of $\tht^\be$ for $\be$ close to $\be_0$ contains $[0,T_0]$. We proceed as in \cite{YuZhengJiu2019}. For $\be \in (1,2)$ such that $|\be-\be_0|<\delta$, let $\tht^\be \in C([0,T^\be_{\text{max}});H^{s+1})$, where $T^\be_{\text{max}}$ is the maximal time of existence. If $T^\be_{\text{max}}\ge T_0$, then the claim follows. So, let us assume that $T^\be_{\text{max}}<T_0$. Taking the $L^2-$ inner product of \eqref{gSQG} with $\tht^\be$, and applying the Cauchy-Schwarz inequality followed by \cref{lem:Li2019} with $g=\nabla \tht^\be, f=\tht^\be$, and $p=2$, we obtain
 \begin{align}\label{est:energy:s+1:first}
       \frac{d}{dt}\Sob{{\tht^\be}(t)}{L^2}^2&\le |\lb [\nabla^\perp \Lam^{\be-2}\cdot,\nabla \tht^\be] \tht^\be,\tht^\be\rb|\notag\\
       &\le C\Sob{\nabla \Lam^{\be-1}\tht^\be}{L^\infty}\Sob{\tht^\be}{L^2}^2\notag\\
       &\le C\Sob{\tht^\be}{H^s}\Sob{\tht^\be}{L^2}^2.
 \end{align}
 Next, taking the $L^2-$ inner product of \eqref{gSQG} with $(-\Delta)^{s+1}\tht^\be$ and using \eqref{est:sigma>4} with $\si=s+1$, we obtain
 \begin{align}\label{est:energy:s+1:second}
      \frac{d}{dt}\Sob{{\tht^\be}(t)}{\Hdot^{s+1}}^2&\le |\lb \Div F^\be_{\tht^\be}(\tht^\be), (-\Delta)^{s+1}\tht^\be\rb|\notag\\
      &\le C\Sob{\tht^\be}{H^s}\Sob{\tht^\be}{H^{s+1}}^2.
 \end{align}
 From \eqref{est:energy:s+1:first} and \eqref{est:energy:s+1:second}, we have
 \begin{align*}
     \frac{d}{dt}\Sob{{\tht^\be}(t)}{H^{s+1}}^2 \le C\Sob{\tht^\be}{H^s}\Sob{\tht^\be}{H^{s+1}}^2.
 \end{align*}
By Gronwall inequality, we obtain
\begin{align*}
    \Sob{\tht^\be(t)}{H^{s+1}}&\le C\text{exp}\left( \int_0^{T^\be_{\text{max}}}\Sob{\tht^\be(t)}{H^{s}}\,dt\right)\Sob{\tht_0}{H^{s+1}}\\
    &\le C\text{exp}\left( \int_0^{T^\be_{\text{max}}}(\Sob{\tht^{\be_0}(t)}{H^{s}}+\Sob{\overline{\tht}}{H^s})\,dt\right)\Sob{\tht_0}{H^{s+1}}\le C
\end{align*}
for $t \in [0,T^\be_{\text{max}}]$. As a result, $\tht^\be(T^\be_{\text{max}})$ is finite, leading to a contradiction since $T^\be_{\text{max}}$ is the maximal time of existence. Therefore, $T^\be_{\text{max}}\ge T_0$ as claimed. \qed
\section{Proof of \cref{main:T:2}}
We will now prove \cref{main:T:2}, which corresponds to the case $\be_0=1$ (the SQG equation). The proof mostly follows a similar approach to that of \cref{main:T:1} except for slight modifications in the estimates of $K_1, K_2, K_3$ in \eqref{energy:K}, $L_3$ in \eqref{energy:L}, and $M_2$, $M_3$ in \eqref{energy:H:M}. To be specific, we will now estimate these terms with bounds that remain valid as $\be \to 1$. The main ingredient is the application of \cref{commutator:estimate:1}, which allows a uniform bound for commutators that appear in the energy arguments. This, along with uniform bounds of \cref{lem:law:1} and \cref{lem:law:2} provides the requisite control of the nonlinearity for treating the case $\be_0=1$. To avoid redundancy, we only show the new estimates required.\par
Applying the Cauchy-Schwarz inequality followed by \cref{commutator:estimate:1}, \eqref{commutator:1:est:1} with $g=\nabla \overline{\tht}$, $f=\overline{\tht}$, and $p=2, q_1=\infty, r_1=2$, we obtain
\begin{align}\label{est:K1:T2}
    |K_1|&=|\lb [\nabla^\perp \Lam^{\be-2}\cdot, \nabla \overline{\tht}]\overline{\tht},\overline{\tht} \rb|\notag\\&\le C(\Sob{ \nabla^2 \overline{\tht}}{L^\infty}\Sob{\overline{\tht}}{L^2}+\Sob{\nabla \overline{\tht}}{L^\infty}\Sob{\overline{\tht}}{L^1})\Sob{\overline{\tht}}{L^2}\notag\\&
    \le C \Sob{\overline{\tht}}{H^s}\Sob{\overline{\tht}}{L^2}^2+C\Sob{\overline{\tht}}{H^s}\Sob{\overline{\tht}}{L^1}\Sob{\overline{\tht}}{L^2}.
\end{align}
Similarly, we obtain
\begin{align}\label{est:K2:T2}
    |K_2|\le C \Sob{\overline{\tht}}{H^s}\Sob{\tht^{\be_0}}{L^2}\Sob{\overline{\tht}}{L^2}+C\Sob{\overline{\tht}}{H^s}\Sob{{\tht^{\be_0}}}{L^1}\Sob{\overline{\tht}}{L^2},
\end{align}
and
\begin{align}\label{est:K3:T2}
    |K_3|\le  C \Sob{{\tht^{\be_0}}}{H^s}\Sob{\overline{\tht}}{L^2}^2+C\Sob{{\tht^{\be_0}}}{H^s}\Sob{\overline{\tht}}{L^1}\Sob{\overline{\tht}}{L^2}.
\end{align}
Applying the Cauchy-Schwarz inequality followed by \cref{lem:law:2} with $q=\overline{\tht}$, $\tht=\tht^{\be_0}$, and $\si=s$, we obtain
\begin{align}\label{est:L3:T2}
     |L_3|\le C\Sob{\tht^{\be_0}}{H^{s+1}}\Sob{\overline{\tht}}{H^s}^2+C\Sob{\tht^{\be_0}}{H^{s+1}}\Sob{\overline{\tht}}{L^1}\Sob{\overline{\tht}}{H^s}.
 \end{align}
 Now, we estimate $M_3$ in a different way. Recall that 
  \begin{align*}
     |M_3|\le (\be-1)\int_{|x-y|\ge 1}\frac{|\ln(|x-y|)|}{|x-y|^{1+\be}}|J^s \tht^{\be_0}(y)|\,dy.
 \end{align*}
Let $\kap \in (0,1)$. Making a change of variables and applying the Young's convolution inequality, we obtain
\begin{align}
     \Sob{M_3}{L^2}&\le (\be-1)\left(\int_{|z|\ge 1}\left(\frac{|\ln|z||}{|z|^{1+\be}}\right)^{q}\,dz\right)^{\frac{1}{q}}\Sob{J^s\tht^{\be_0}}{L^r}\notag\\
     &\le (\be-1)\left(\int_{|z|\ge 1}\frac{1}{|z|^{(1+\be-\kap)q}}\,dz\right)^{\frac{1}{q}}\Sob{J^s \tht^{\be_0}}{L^r}\notag\\
     &\le (\be-1)\frac{C}{((1+\be-\kap)q-2)^{\frac{1}{q}}}\Sob{J^s\tht^{\be_0}}{L^r}, \label{est:M3:T:2}
\end{align}
 where $q$ and $r$ satisfy
 \[\frac{1}{q}+\frac{1}{r}=\frac{3}{2},\quad q>\frac{2}{1+\be-\kap}.\]
 As a result, we have
 \[r<\frac{2}{2-\be+\kap}<2\]
 for $\be<1+\kap$.
 Using the Gagliardo-Nirenberg inequality \cite{BahouriCheminDanchinBook2011}, we have
 \begin{align*}
     \Sob{\Lam^s \tht^{\be_0}}{L^r}\le \Sob{\tht^{\be_0}}{L^1}^{1-\tau}\Sob{\Lam^{s+1}\tht^{\be_0}}{L^2}^{\tau},
 \end{align*}
 where  $\tau=1-\frac{2}{r(s+2)}$ and $r$ satisfies $r\ge \frac{2(s+1)}{(s+2)}$. For $\kap$  sufficiently small, we can choose $r$ such that
 \[\frac{2(s+1)}{(s+2)}\le r< \frac{2}{2-\be+\kap}.\]
Collecting the estimates in \eqref{est:M1}, \eqref{est:M2:case1}, \eqref{est:M2:case2}, and \eqref{est:M3:T:2}, setting $\veps=(\be-1)$, and then applying Young's inequality, we obtain
     \begin{align}\label{est:H:T:2}
     \Sob{\mathcal{H}^{\be,\be_0}(J^s\tht^{\be_0})}{L^2}\le C((\be-1)^{2-\be}+(\be-1)+(\be-1)|\ln (\be-1)|)\left(\Sob{\tht^{\be_0}}{H^{s+1}}+\Sob{\tht^{\be_0}}{L^1}\right).
 \end{align}
 Using the estimates \eqref{est:K1:T2}-\eqref{est:K3:T2}, \eqref{est:L1}, \eqref{est:L2}, \eqref{est:L3:T2}, and \eqref{est:H:T:2}, in \eqref{energy:K} and \eqref{energy:L}, and proceeding just like before, we obtain
 \begin{align*}
     \frac{d}{dt}\Sob{\overline{\tht}(t)}{H^s}\le& C\Sob{\overline{\tht}}{H^s}^2+C\Sob{\overline{\tht}}{H^s}(\Sob{\tht^{\be_0}}{H^{s+1}}+\Sob{\tht^\be}{L^1}+\Sob{\tht^{\be_0}}{L^1})\\
     &+C \zeta(\be)(\Sob{\tht^{\be_0}}{H^{s+1}}^2+\Sob{\tht^\be}{L^1}^2+\Sob{\tht^{\be_0}}{L^1}^2),
 \end{align*}
 where 
 \[\zeta(\be)=(\be-1)^{2-\be}+(\be-1)+(\be-1)|\ln (\be-1)|.\]
 As $\nabla \cdot u=0$, $\Sob{\tht}{L^p}$ is conserved for any $p\ge 1$. Therefore, $\Sob{\tht^\be}{L^1}+\Sob{\tht^{\be_0}}{L^1}$ is bounded if the initial data $\tht_0 \in L^1$. Proceeding just like in the proof of \cref{main:T:2}, we deduce that
 \begin{align*}
     \Sob{\overline{\tht}}{H^{s}}\le C\left((\be-1)^{2-\be}+(\be-1)|\ln (\be-1)|\right),
 \end{align*}
 thus completing the proof
 \qed

\bibliographystyle{plain}
\bibliography{main_bib.bib}
\vspace{.3in}
\end{document}